\documentclass[11pt,a4paper,reqno]{article}
\usepackage[utf8]{inputenc}
\usepackage[shortlabels]{enumitem}
\usepackage{amsmath}
\usepackage{amssymb}
\usepackage{amsthm}
\usepackage{abstract}
\usepackage{xcolor}
\usepackage{comment}
\usepackage{mathtools}
\usepackage{graphicx}
\usepackage{caption}
\usepackage{subcaption}
\usepackage{float}
\usepackage{hyperref}
\usepackage{enumitem}
\usepackage{tikz}
\usepackage{centernot}
\usepackage{autonum}
\usepackage{framed}
\usepackage{etoolbox}
\usepackage{subcaption}

\usepackage{scalerel}

\usepackage{setspace}
\makeatletter
\def\namedlabel#1#2{\begingroup
   \def\@currentlabel{#2}%
   \label{#1}\endgroup
}
\makeatother
\usepackage{dsfont}

\definecolor{applegreen}{rgb}{0.55, 0.71, 0.0}
\definecolor{bleudefrance}{rgb}{0.19, 0.55, 0.91}
\definecolor{deepcarrotorange}{rgb}{0.91, 0.41, 0.17}
\definecolor{forestgreen}{rgb}{0.13, 0.55, 0.13}
\definecolor{linkteal}{RGB}{0,90,90}
\definecolor{linkpurple}{RGB}{60,0,90}
\definecolor{linkred}{RGB}{100,0,0}
\definecolor{linkblue}{RGB}{0,51,102}

\hypersetup{
    colorlinks=true,
    linkcolor=black,
    filecolor=black,      
    urlcolor=black,
    citecolor=black
    }
\allowdisplaybreaks
\makeatletter
\newcommand*{\barfix}[2][.175ex]{%
  \mathpalette{\@barfix{#1}}{#2}%
}
\newcommand*{\@barfix}[3]{%
  \vbox{%
    \kern#1\relax
    \hbox{$#2#3\m@th$}%
  }%
}
\makeatother

\usepackage[capitalise]{cleveref}
\newtheorem{theorem}{Theorem}
\newtheorem{thm}{Theorem}[section]
\newtheorem{corollary}[thm]{Corollary}
\newtheorem{lemma}[thm]{Lemma}
\newtheorem{proposition}[thm]{Proposition}

\newtheorem{definition}[thm]{Definition}
\theoremstyle{definition}
\newtheorem{remark}[thm]{Remark}

\newcommand{\cB}{\mathcal{B}}
\newcommand{\cC}{\mathcal{C}}
\newcommand{\cD}{\mathcal{D}}
\newcommand{\cE}{\mathcal{E}}

\newcommand{\cK}{\mathcal{K}}

\newcommand{\cM}{\mathcal{M}}

\newcommand{\cS}{\mathcal{S}}

\newcommand{\conn}{\leftrightarrow}

\newcommand{\partialin}{\partial^{-}\!}
\newcommand{\partialout}{\partial^{+}\!}

\usepackage[margin=1in]{geometry}

\def\eps{\varepsilon}

\definecolor{darkgreen}{rgb}{0.18, 0.7, 0.46}

\title{\vspace{-2em}Supercritical sharpness of percolation}
\author{%
  \small
  Sahar Diskin\footnote{Department of Mathematics, ETH Z\"urich} \;
  Philip Easo\footnote{Institute for Theoretical Studies, ETH Z\"urich, and Trinity College, University of Cambridge} \;
  Ritvik Ramanan Radhakrishnan\footnotemark[1] \;
  Benny Sudakov\footnotemark[1] \;
  Vincent Tassion\footnotemark[1]\\[0.6ex]
}

\date{}
\begin{document}
\maketitle
\vspace{-2em}\begin{abstract}
\begin{quote}
We prove that for supercritical percolation on every infinite transitive graph, the probability that the origin belongs to a finite cluster of size at least $n$ decays exponentially in $\Phi(n)$, where $\Phi$ is the isoperimetric function of the graph. 
\end{quote}
\end{abstract}
\section{Introduction}\label{s: intro}

Let $G=(V,E)$ be an infinite connected (locally-finite, vertex-)transitive graph. Let $\mathrm{P}_p$ be the standard percolation measure, under which each edge is open with probability $p$ independently of all others. Fix a vertex $o$, and write $p_c$ for the critical point given by
\begin{align}
    p_c\coloneqq\inf\{p\in [0,1]\colon \mathrm{P}_p(o\conn \infty)>0\}.
\end{align}

A fundamental result in percolation theory is subcritical sharpness (\cite{M86,AB87}, see also \cite{AV08,DT16,DCRT19, V25}), which says that for all $p<p_c$, there exists $c>0$ such that for all $n\ge 1$, the cluster $\cC_o$ of the origin satisfies
\begin{align}
    \mathrm{P}_p(|\cC_o|\ge n)\le e^{-cn} \quad \text{and} \quad \mathrm{P}_p(o\conn \partial \mathrm{B}_n)\le e^{-cn},
\end{align}
where $\mathrm{B}_n$ is the ball of radius $n$ centred at $o$, and $\partial S$ is the set of all edges with exactly one endpoint in a given set of vertices $S$.

In this paper, we will prove the following supercritical counterpart to subcritical sharpness. Write $\Phi\colon \mathbb{N}\to\mathbb{N}$ for the isoperimetric function given by
\begin{align}
    \Phi(n)\coloneqq \min\left\{|\partial S|\colon S\subset V\text{ and }n\le |S|<\infty\right\}.
\end{align}
\begin{theorem}\label{thm: exp phi}
For all $p>p_c$ there exists $c>0$ such that for all $n\ge 1$,
\begin{align}
    \mathrm{P}_p(n\le |\cC_o|<\infty)\le e^{-c\,\Phi(n)} \quad \text{and} \quad \mathrm{P}_p(o\conn\partial \mathrm{B}_n\text{ but }o\centernot\leftrightarrow\infty)\le e^{-cn}.
\end{align}
\end{theorem}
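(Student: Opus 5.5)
Here is the plan. Both bounds come from a single \emph{finite-energy / sprinkling} argument, powered by one key input: uniqueness-type connectivity estimates for supercritical percolation on transitive graphs.

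\textbf{Step 1: choose the input.} I would rely on the recent supercritical machinery for transitive graphs, in the style of Easo--Hutchcroft and Contreras--Martineau--Tassion. Fix $p_c<q<p$. The estimate I would invoke is: for every finite connected set $S$ and every vertex $v\in S$,
\[
\mathrm{P}_q(v\conn\infty)\ge \theta(q)>0 .
\]
More importantly, I want a quantitative "two large clusters must merge" estimate, saying that large finite clusters are exponentially unlikely to avoid the infinite cluster.

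\textbf{Step 2: the reduction.} Write $p=1-(1-q)(1-\delta)$, so that $\omega_p=\omega_q\cup\omega_\delta$ as a union of independent configurations. Suppose $\cC_o$ is finite in $\omega_p$ with $|\cC_o|\ge n$, and let $S=\cC_o$. Then every edge of $\partial S$ is $p$-closed, and in particular closed in $\omega_\delta$.

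The idea is that the boundary $\partial S$, which has at least $\Phi(n)$ edges, is "seen" by the infinite $q$-cluster many times. Each boundary edge adjacent to an infinite $q$-cluster vertex outside $S$ would have to be $\delta$-closed, costing a factor $(1-\delta)$ per edge.

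So the plan is to show that, with probability at least $1-e^{-c\Phi(n)}$, for every such $S$ there are at least $c|\partial S|$ edges of $\partial S$ whose outer endpoints lie in infinite $q$-clusters. Granting this, we condition on $\omega_q$ and pay $(1-\delta)^{c|\partial S|}$.

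\textbf{Step 3: the union bound, which is the main obstacle.} Summing over $S$ is expensive. The number of connected sets of size $m$ containing $o$ is $e^{O(m)}$, whereas $\Phi(n)$ may be much smaller than $n$; for example $\Phi(n)\asymp n^{(d-1)/d}$ on $\mathbb{Z}^d$. So a naive sum fails, and this step is the real difficulty.

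To get around it, I would coarse-grain. Rather than enumerating $S$, I would enumerate a boundary "skeleton": a $\epsilon$-net of $\partial S$ at some scale $r$. Its entropy is $e^{O(|\partial S|\log r/|B_r|)}$ or better. Then I would use a one-arm / local-uniqueness estimate at scale $r$, namely that with $q$-probability close to $1$ every ball of radius $r$ has the property that all large clusters in it are connected to infinity. This would be derived from the supercritical results assumed known (e.g.\ the "local uniqueness" theorem on transitive graphs).

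Choosing $r$ large makes the per-block failure probability small enough to beat the skeleton entropy. This is a Peierls argument on the renormalized graph, where the relevant isoperimetry is still comparable to $\Phi$ by quasi-isometry invariance of isoperimetric profiles.

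\textbf{Step 4: the radius bound.} The second bound follows by the same argument with $S$ replaced by the finite cluster reaching $\partial\mathrm{B}_n$. Such a cluster has diameter at least $n$, so its coarse-grained boundary contains at least $n/r$ disjoint blocks. Each block must independently exhibit the failure event or a $\delta$-closed crossing, which gives $e^{-cn}$ after the same Peierls sum over lattice-animal skeletons of size at least $n/r$.
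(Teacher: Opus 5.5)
Your proposal has a genuine gap. It is a classical renormalisation/Peierls scheme, and the inputs it needs are either the theorem itself or unavailable on a general transitive graph.

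The ``quantitative'' estimate you want in Step~1 (large finite clusters are exponentially unlikely to avoid the infinite cluster) is exactly the statement you are proving. The block estimate in Step~3 does not rescue this. In its classical local form (all large clusters in a ball are connected to each other nearby) it is false in the non-uniqueness phase, e.g.\ on a regular tree of degree at least $3$ for any $p\in(p_c,1)$, where infinitely many infinite clusters coexist. In the form you wrote (all large clusters in the ball reach infinity) it is non-local, so the block events are not finitely dependent and the Peierls sum has no product structure. Your entropy bound for boundary skeletons also presupposes that the boundary of $\cC_o$ (or a minimal cutset around it) is coarsely connected. That is the special feature of polynomial-growth graphs exploited by Contreras--Martineau--Tassion; it is not available in general, and quasi-isometry invariance of $\Phi$ does not supply it. Separately, $c|\partial S|$ touches is the wrong target in Step~2: edges of $\partial S$ leading into finite holes of $V\setminus S$ can never meet an infinite $q$-cluster, so only order $\Phi(S)$ touches are possible.

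The missing idea is that you need neither a union bound over $S$ nor any high-probability statement. Take independent $\omega\sim\mathrm{P}_q$ and $\zeta\sim\mathrm{P}_\eps$ with $\omega\cup\zeta\sim\mathrm{P}_p$. Let $\cE$ be the event that $\cC_S(\omega\cup\zeta)$ is finite and at least $t$ of its boundary edges are connected to infinity in $\omega$. Condition on $\cC_S(\omega\cup\zeta)=C$ and use that $\omega$ off $C$ is independent of this event; this gives $\mathbb{P}(\cE)\ge\mathrm{P}_p(S\centernot\leftrightarrow\infty,\ \Phi(\cC_S)\ge n)\cdot\inf_{\Phi(C)\ge n}\mathrm{P}_q(\Psi(C)\ge t)$. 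On the other hand, $\mathbb{P}(\cE)\le(1-\eps)^t$, because those $t$ edges must all be $\zeta$-closed. So with $t\asymp n$ it suffices to prove the \emph{weak} bound $\mathrm{P}_q(\Psi(C)\ge t)\ge(1-\eps/2)^t$ for each fixed $C$ and $t\le c\,\Phi(C)$.

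The paper proves this weak bound by creating touches one at a time, each with conditional probability at least $1-\eps$. Since $\Phi(C)\gg t$, there are many disjoint mid-balls between $\partialout C$ and the already-explored region that avoid all previously revealed balls. An independent low-density seed layer makes one of them fully open, its $p$-cluster is grown, and sprinkling on the boundary of the explored region glues it on.

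Finally, Step~4 assumes without proof that a cluster of diameter $n$ has boundary spread over $\gtrsim n/r$ blocks. This fails on graphs quasi-isometric to $\mathbb{Z}$. In general it requires a geometric lemma like the paper's: $\Phi(m)\ge\kappa m^{1/2}$ for all $m$ implies $|\partial A|\ge\delta\operatorname{diam}(A)$ for finite connected $A$. Gromov--Trofimov is then used to dispose of the remaining case, where $p_c=1$.
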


We will deduce Theorem \ref{thm: exp phi} from the following more general result. For each finite set of vertices $W$, define
\[
    \Phi(W) := \min\left\{|\partial S| \colon W \subset S \subset V\text{ and } |S|<\infty\right\}.
\]
Equivalently, $\Phi(W)$ is the size of the smallest edge cutset from $W$ to $\infty$. Also write $\cC_S$ for the set of vertices connected to $S$. 
\begin{theorem}\label{thm: main}
For all $p>p_c$ there exists $c>0$ such that for every finite set of vertices $S$ and for all $n\ge 1$,
\begin{align}
   \mathrm{P}_p(S \centernot\leftrightarrow \infty \text{ and } \Phi(\cC_S)\ge n)\le e^{-cn}.
\end{align}
\end{theorem}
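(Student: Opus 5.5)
We plan to prove Theorem~\ref{thm: main} by a sprinkling argument combined with the observation that the event is witnessed by a large closed cutset. Fix $p_c<p_1<p$ and write $p$-percolation as the union of independent $p_1$-percolation and $q$-percolation, where $(1-p_1)(1-q)=1-p$. On the event $\{S\centernot\leftrightarrow\infty,\ \Phi(\cC_S)\ge n\}$, every edge of $\partial\cC_S$ is closed in both configurations. Moreover $|\partial\cC_S|\ge\Phi(\cC_S)\ge n$. The aim is to show that such a configuration is exponentially unlikely in $n$.

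The key intermediate statement is a ``local uniqueness/density'' input. For $p_1>p_c$ on a transitive graph, we want a uniform lower bound of the following kind: for every finite connected set $K$ with $|\partial K|=m$, the number of edges $e\in\partial K$ whose outer endpoint lies in an infinite $p_1$-cluster, or at least connects within distance $O(1)$ to an infinite cluster, is at least $\delta m$ with probability $1-e^{-cm}$. Given this, we argue as follows. Explore $\cC_S$ in the $p$-configuration. For each candidate finite cluster $K\supset S$ we need all edges in $\partial K$ closed, yet many of them neighbour infinite clusters. We would then use sprinkling: each such boundary edge is independently $q$-open with probability $q$, and any single opening would connect $S$ to $\infty$. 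This yields a factor $(1-q)^{\delta m}$.

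The main obstacle is avoiding a union bound over the exponentially many shapes $K$, which would lose against $e^{-cm}$ unless the constants match. We therefore expect not to sum over shapes. Instead we follow a stopping-set exploration, in the spirit of the OSSS/exploration proofs of subcritical sharpness cited above. We reveal the boundary of $\cC_S$ edge by edge, and each time a closed boundary edge is found whose outer side is ``good'' (connected to infinity in an independent copy), we pay a factor $(1-q)$. We then bound the number of good edges revealed via a martingale or Azuma-type bound.

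The hardest step is establishing the density of good boundary edges uniformly over arbitrary finite sets, since in general transitive graphs there is no renormalisation. For this we would try the recent approach to supercritical local uniqueness: show that $\mathrm{P}_{p_1}(x\conn\infty)\ge\theta>0$ for every $x$ by transitivity. Then the expected fraction of good outer endpoints is at least $\theta$. To upgrade this to exponential concentration, we would use a second sprinkling together with $\Phi(\cC_S)$ rather than $|\partial \cC_S|$. Minimal cutsets admit $n$ edge-disjoint paths to infinity (Menger), which gives $n$ disjoint routes along which independent attempts to reach $\infty$ can be made. Each attempt succeeds with probability bounded below, which yields $e^{-cn}$ by independence. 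This Menger step, making the $n$ routes' events genuinely independent of the exploration of $\cC_S$, is where we expect most of the work.

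Finally, Theorem~\ref{thm: exp phi} follows: $|\cC_o|\ge n$ gives $\Phi(\cC_o)\ge\Phi(n)$. For the radius bound, $o\conn\partial\mathrm{B}_n$ gives a path of length $n$ inside $\cC_o$. Every cutset separating this path from infinity has size at least of order $n$ on an infinite transitive graph, using $\Phi(n)\gtrsim$ const together with the path's diameter.
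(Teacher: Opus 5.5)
There is a genuine gap. The ``density of good boundary edges'' input, and the Menger/independence argument meant to prove it, carry the entire difficulty of the theorem, and neither works as proposed.

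As stated (with $m=|\partial K|$) the input is false. In $\mathbb Z^d$, a connected comb-like $K\subset\mathrm B_L$ (e.g.\ the even $x_2$-layers of $\mathrm B_L$ joined along the $x_2$-axis) has $|\partial K|\asymp L^d$. Yet closing $\partial\mathrm B_{L+1}$ costs only $e^{-CL^{d-1}}$ and makes every boundary edge bad. With $\Phi(K)$ in place of $m$, it becomes a high-probability statement at least as strong as the one you want to prove, and your proposed proof of it fails:
\begin{itemize}
\item Connection to $\infty$ is not supported on your Menger paths (a fixed infinite path is open with probability $0$). So the ``attempts'' all draw on the same part of the graph and are not independent.
\item They are increasing events, so Harris--FKG bounds the probability that all of them fail from \emph{below}, not above.
\item The unconditional bound $\mathrm P_{p_1}(x\conn\infty)\ge\theta$ gives nothing once you condition on $\cC_S$ with $\partial\cC_S$ closed. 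A vertex in a narrow fjord of $V\setminus\cC_S$ may have tiny probability of reaching $\infty$ off $\cC_S$.
\end{itemize}
The same objection kills the Azuma step. Along an exploration there is no uniform lower bound on the conditional probability that the next boundary edge is good, because the accumulated negative information can nearly enclose the unexplored region.

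Your sprinkling half is essentially the paper's, but the paper pairs it with a much weaker input. Write $\mathrm P_p$ as the law of $\omega\cup\zeta$ with $\omega\sim\mathrm P_q$, $\zeta\sim\mathrm P_\eps$, $q\in(p_c,p)$. Conditioning on $\cC_S(\omega\cup\zeta)=C$ leaves $\omega$ fresh off $\overline C$. So the probability that $S\centernot\leftrightarrow\infty$ in $\omega\cup\zeta$ while at least $t$ edges $e\in\partial\cC_S$ satisfy $e\xleftrightarrow{\omega}\infty$ equals $\sum_C\mathrm P_p(\cC_S=C)\,\mathrm P_q(\Psi(C)\ge t)$, with no union bound over shapes. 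On the other hand, revealing $\mathcal K=\{v: v\xleftrightarrow{\omega}\infty\}$ and the cluster of $S$ off $\overline{\mathcal K}$, and then asking $\zeta$ to be closed on the $\ge t$ touching edges, bounds the same probability by $(1-\eps)^t$. Hence only the \emph{lower} bound $\mathrm P_q(\Psi(C)\ge t)\ge(1-\eps/2)^t$ for $t\le h\,\Phi(C)$ is needed, since $\big((1-\eps)/(1-\eps/2)\big)^t\le e^{-\eps t/2}$. This is Proposition~\ref{prop: main}, the heart of the paper.

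That proposition is proved by creating touches one at a time, each with conditional probability $\ge 1-\eps$:
\begin{itemize}
\item Find $\ell$ disjoint mid-balls between $\partialout S\setminus K$ and the cluster $K$ built so far, avoiding the region $D$ where the seed layer was already inspected (Lemma~\ref{lem:seeds}). They exist because a vertex cutset between these sets, together with $\partial K\cap\partial S$, separates $S$ from $\infty$ and hence has size $\ge\Phi(S)$. This is the legitimate use of your Menger intuition.
\item Search for a fully open mid-ball in an independent $\eta$-layer.
\item Explore its cluster in the unrevealed $\omega\setminus\overline K$.
\item Glue it to $K$ by sprinkling on $\partial K$ (Lemmas~\ref{lem: sprinkling} and~\ref{lem:Bconnectstoblahwithgoodproba}).
\end{itemize}
The freshness of the configuration used at each step is what replaces the independence your argument assumes.

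Separately, your closing remark on the radius bound is wrong as stated. On graphs quasi-isometric to $\mathbb Z$, a long path is cut from $\infty$ by boundedly many edges. The paper needs Proposition~\ref{prop: geom} under a two-dimensional isoperimetric inequality, with Gromov--Trofimov disposing of the remaining case, where $p_c=1$.
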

\subsection{Comments}\label{s: consequences}

\paragraph{Historical context.}
In the first decades after its introduction by Broadbent and Hammersley~\cite{MR91567}, percolation was primarily studied on the hypercubic lattice $\mathbb Z^d$. Both subcritical and supercritical sharpness of the percolation phase transition were ultimately well understood in this context. For relevant textbooks, see ~\cite{Grimmetts_perco_book,BO06}. In 1996, Benjamini and Schramm launched the study of percolation on arbitrary infinite transitive graphs in their seminal paper, \emph{Percolation beyond $\mathbb Z^d$}~\cite{perco_beyond_zd}. See~\cite[Chapters 7 and 8]{Prob_on_trees_and_networks} for a survey of the rich research area that has since emerged. The proofs of certain results, such as subcritical sharpness, were easily extended to this more general setting. In our paper, we focus on supercritical sharpness, a result that is well-known in the context of the hypercubic lattice but whose proofs have remained restricted to specific geometries.


\paragraph{Previous results on supercritical sharpness.}
    In the classical case that $G= \mathbb Z^d$, supercritical sharpness was deduced in~\cite{deducing_exp_from_GM} from the celebrated Grimmett--Marstrand slab theorem~\cite{GM90} that for all $d \geq 3$,
    \[ 
            \lim_{m \to \infty} p_c \big(\mathbb Z^2 \times\{-m,\ldots,m\}^{d-2} \big) = p_c\big( \mathbb Z^{d} \big).
    \]
    At the end of our paper, we will explain why conversely, this slab theorem is easily implied by supercritical sharpness.

    More recently, supercritical sharpness was extended from $\mathbb Z^d$ to every infinite transitive graph of \emph{polynomial growth} \cite{CMT24}. At this level of generality, one cannot use renormalisation schemes that rely on the symmetries of $\mathbb Z^d$. However, the geometry of transitive graphs of polynomial growth is nevertheless known to be highly rigid following the foundational works of Gromov and Trofimov~\cite{G81,T03}. In particular,~\cite{CMT24} uses the special fact that minimal cutsets in such graphs are coarsely connected. By a different argument, supercritical sharpness was also established for every infinite transitive graph that is nonamenable \cite{HH21}, i.e.\! such that $\inf_{n\ge 1} \Phi(n)/n > 0$.

    The problem of establishing supercritical sharpness for general infinite transitive graphs has been around as folklore for some time and was recently recorded in~\cite[Conjectures 5.1--5.3]{HH21}. 

\paragraph{Tightness.} 
    Theorem \ref{thm: exp phi} is sharp in the sense that for all $p > p_c$, there exists $C < \infty$ such that for all $n\ge 1$
    \[
        \mathrm{P}_p(n\le |\cC_o|<\infty)\geq e^{-C\Phi(n)} \quad \text{and} \quad     \mathrm{P}_p(o\conn\partial \mathrm{B}_n\text{ but }o\centernot \leftrightarrow\infty)\geq e^{-Cn}.
    \]
    Indeed, the first of these inequalities was established in \cite[Comment 9]{EST25}, while the second follows by considering the event that $\cC_o$ is a fixed geodesic path of length $n$.

\paragraph{Transience.}
Following~\cite{BLS99}, there has been much work investigating which properties of the random walk on a transitive graph are stable under percolation. (See e.g.\! \cite{BM03,Peres,Gaboriau,Gabor,DisorderEntropy}, and in the context of finite transitive graphs of high degree, \cite{FR08,BKW14, EKK23,ADLZ25}). Conjecture 1.7 from \cite{BLS99} is that for every infinite transitive graph\footnote{Strictly, their conjecture was only about Cayley graphs.} $G$ that is transient, for all $p \in [0,1]$,
\begin{equation} 
    \mathrm P_p \left( \text{ every infinite cluster is transient\,} \right) =1.\label{eq:transience}
\end{equation}
Note that \eqref{eq:transience} holds vacuously when $p < p_c$, and if one believes the well-known conjecture from~\cite{perco_beyond_zd} that $\mathrm P_{p_c}(o \leftrightarrow \infty) =0$, then \eqref{eq:transience} should also hold vacuously when $p = p_c$.
By an argument from \cite[Section 3]{H22}, adapted from \cite{Gabor}, our Theorem \ref{thm: exp phi} has the following corollary.
\begin{corollary}
For every infinite connected transitive graph that is transient, \eqref{eq:transience} holds for all $p > p_c$.
\end{corollary}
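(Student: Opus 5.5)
The plan is to show that, almost surely, every infinite cluster satisfies an anchored isoperimetric inequality strong enough to force transience, following \cite[Section 3]{H22} and \cite{Gabor}. Two inputs are needed.

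\emph{Step 1 (isoperimetry of $G$).} Since $G$ is transient and transitive, its volume growth is at least cubic. By the Coulhon--Saloff-Coste isoperimetric inequality, $\Phi(n)\ge c\,n^{2/3}$. I will aim for the bound $\Phi(n)\ge c\,n^{2/3}$, or more generally any profile $f$ with $\sum_n f(n)^{-2}<\infty$.

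\emph{Step 2 (transience criterion).} I will use the criterion of Lyons--Morris--Schramm in its anchored form. Suppose a graph $H$ has a vertex $v$ such that every finite connected $K\ni v$ with $|K|\ge n_0$ satisfies $|\partial_H K|\ge c f(|K|)$, where $\sum_n f(n)^{-2}<\infty$. Then $H$ is transient. It therefore suffices to prove the following. For $p>p_c$ there is $\delta>0$ such that, almost surely, only finitely many $n$ admit a connected $K\ni o$ with
\[
K\subset\cC_o,\qquad |\cC_o|=\infty,\qquad n\le|K|<2n,\qquad |\partial_{\cC_o}K|\le\delta\,\Phi(n).
\]
Here $\partial_{\cC_o}K$ denotes the open edges leaving $K$. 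By transitivity and a union over the starting vertex, this gives the anchored inequality with $f=\Phi$ for every infinite cluster, and hence transience.

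\emph{Step 3 (bounding bad sets via Theorem~\ref{thm: main}).} I will compare such a bad $K$ with the finite-cluster event of Theorem~\ref{thm: main}. Let $F=\partial_{\cC_o}K$, and let $\omega'$ be $\omega$ with the edges of $F$ closed. In $\omega'$ the cluster of $o$ is exactly $K$, since $K$ is connected and every open exit has been closed. This cluster is finite and satisfies $\Phi(K)\ge\Phi(n)$. Closing $m=|F|$ prescribed edges costs at most a factor $((1-p)/p)^m$ by insertion tolerance. The map $\omega\mapsto\omega'$ is at most $\binom{|\partial K|}{m}$-to-one, because $F\subset\partial K$ and $K$ is determined by $\omega'$. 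Theorem~\ref{thm: main} with $S=\{o\}$ gives $\mathrm P_p(\omega'\text{ event})\le e^{-c\Phi(n)}$. When $|\partial K|\le A\,\Phi(n)$, choosing $\delta$ small relative to $c$ and $A$ makes the entropy and insertion factors $e^{O(\delta\log(A/\delta))\Phi(n)}$, which are beaten by $e^{-c\Phi(n)}$. Borel--Cantelli over dyadic scales then finishes, since $\Phi(n)\ge cn^{2/3}\gg\log n$.

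\emph{Main obstacle.} The difficulty is the regime where $|\partial K|\gg\Phi(n)$, because there the binomial entropy is not controlled. I would first try to handle it using only a bound in which the $\Phi$ of the closed-off cluster also controls its full boundary. The route is to replace $K$ by the set $K^*$ that minimises $|\partial_G K^*|$ among sets with $K\subset K^*$ and $|K^*|<\infty$, and to close only the open edges of $\partial K^*$ that are needed. The hope is that this keeps $|\partial K^*|=\Phi(K)$ while the number of closed edges stays $O(\delta\Phi(n))$. If that fails, the fallback is to prove directly that most boundary edges of a finite cluster are closed, via a Peierls-type count using Theorem~\ref{thm: main} applied scale by scale.
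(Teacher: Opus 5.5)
Your Steps~1 and~2 are sound. They are the reduction behind the paper's one-line proof, which invokes the argument of \cite[Section 3]{H22}, adapted from \cite{Gabor}: Theorem~\ref{thm: exp phi} together with $\Phi(n)\ge c\,n^{2/3}$ gives anchored isoperimetry of infinite clusters, and the Thomassen/Lyons--Morris--Schramm criterion then gives transience. The genuine gap is Step~3, exactly where you flag it, and the proposal does not close it. When $|\partial K|$ is of order $n$, the number of candidate sets $F\subset\partial K$ with $|F|\le\delta\Phi(n)$ is $\exp\bigl(\Theta(\delta\,\Phi(n)\log(n/\Phi(n)))\bigr)$. Whenever $\Phi(n)=o(n)$, which is the amenable case and the only new one, no fixed $\delta$ makes this lose against $e^{-c\Phi(n)}$.

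The $K^*$ idea does not repair this, for two reasons.
\begin{itemize}
\item $F$ lies in $\partial K$, not in $\partial K^*$, so knowing $K^*$ does not reduce the entropy of $F$.
\item If you instead seal $o$ off along $\partial K^*$, the open edges of $\partial K^*$ reachable from $o$ inside $K^*$ are not controlled by $|F|$. The cluster leaves $K$ through $F$ and can then branch inside $K^*\setminus K$ (for instance in fjords of $K$ that $K^*$ fills in). It can meet $\partial K^*$ in up to $|\partial K^*|=\Phi(K)$ open edges. Insertion tolerance plus the choice of which edges to close then costs a factor $e^{O(\Phi(K))}$ whose constant you cannot make small relative to $c$.
\end{itemize}
The fallback is too vague to assess. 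As literally stated it is vacuous, since every boundary edge of a cluster is closed.

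The missing observation is that transience does not need the full profile $f=\Phi$. Because $\Phi(n)\ge c\,n^{2/3}$, a logarithmic loss is affordable, and with it your crude count already works. Set $m_n:=\lfloor\delta\Phi(n)/\log n\rfloor$ and $\lambda:=\max(1,p/(1-p))$. You need the identity $\mathrm P_p(K\text{ open-connected},\ \omega\cap\partial K=F)=\mathrm P_p(\cC_o=K)\,(p/(1-p))^{|F|}$ and the bound $|\partial K|\le 2dn$. With these, for large $n$,
\[
\begin{aligned}
&\mathrm P_p\bigl(\exists\, K\ni o \text{ open-connected},\ n\le |K|<2n,\ |\omega\cap\partial K|\le m_n\bigr)\\
&\qquad\le \mathrm P_p(n\le|\cC_o|<\infty)\,(m_n+1)\,(2dn\lambda)^{m_n}\le (m_n+1)\,e^{-(c-2\delta)\Phi(n)} .
\end{aligned}
\]
This is summable over dyadic $n$, and the condition $|\cC_o|=\infty$ is not even used. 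Borel--Cantelli then gives, almost surely, $|\partial_{\cC_o}K|\ge c'|K|^{2/3}/\log|K|$ for all large connected $K\ni o$ in an infinite $\cC_o$. The profile $f(n)=n^{2/3}/\log n$ still satisfies $\sum_n f(n)^{-2}<\infty$, so your Step~2 criterion applies. Removing the logarithm is where a cluster-repulsion input in the spirit of \cite{Gabor} would enter, but the corollary does not need it.
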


 
\subsection{Notation}\label{s: notation}
We use the convention that $\mathbb{N}$ contains $0$. Let $G=(V,E)$ be a graph. Throughout the paper, we assume that every graph is locally finite.

\noindent \textbf{Graphs.} Given a set of vertices $A$, we define the following:

\begin{center}
\renewcommand{\arraystretch}{1.2}
    \begin{tabular}{| c | l|}
    \hline 
    \rule{0pt}{3ex}
    $\overline A$\; & edges with at least one endpoint in $A$ \\
    \;$A^\circ$ & edges with both endpoints in $A$ \\
    $\partial A$ &  edges with exactly one endpoint in $A$ 
\\
  $\partialin A$ & vertices in $A$ that are incident to edges in 
    $\partial A$  \\
    $\partialout A$ &  vertices in $V\setminus A$ that are incident to edges in $\partial A$ \\
    $\mathrm{B}_r(A)$ & vertices at distance at most $r$ from $A$
\\
$G[A]$ & the subgraph induced by $A$
\\

    \hline

\end{tabular}
\end{center}
\noindent \textbf{Cutsets. } Given sets of vertices $L$ and $R$, we say that a set of edges $\Pi$ is an edge cutset from $L$ to $R$ if $L$ and $R$ are not connected to each other in the graph $(V,E\setminus\Pi)$. We say that a set of vertices $A$ is a vertex cutset from $L$ to $R$ if $L\setminus A$ is not connected to $R\setminus A$ in $(V\setminus A,E\setminus\overline A)$.

\noindent \textbf{Percolation.}
For each $p\in [0,1]$, let $\mathrm{P}_p^G$, or simply $\mathrm{P}_p$, be the standard percolation measure of parameter $p$ on $G$. Given a set of vertices $A$, we may write $\mathrm{P}_p^A$ as shorthand for $\mathrm{P}_p^{G[A]}$. When working with random variables that are not defined in terms of $\mathrm{P}_p$, we think of them as all living in one big probability space called $(\Omega, \mathcal{F}, \mathbb{P})$.

\noindent \textbf{Connectivity.}  Given sets of vertices $L$ and $R$ and given a set of edges $F$, we write $L\xleftrightarrow{F}R$ for the event that $L$ is connected to $R$ in the graph $(V,F)$. As usual, we identify each configuration $\zeta \in \{0,1\}^E$ with its support, which is a set of edges. In particular, we may use expressions such as $\zeta \cap F$ and $L \xleftrightarrow{\zeta \cap F} R$. We write $L\xleftrightarrow{F}\infty$ to mean that the set of vertices connected to $L$ in the graph $(V,F)$ is infinite. We use the same definitions when $L$ or $R$ are sets of edges, in which case we naturally identify these sets with the set of vertices contained in at least one of these edges. When working with $\mathrm P_p$ specifically, given a set of vertices $A$, we write $L \xleftrightarrow{A} R$ to mean that $L \xleftrightarrow{\omega \cap A^\circ} R$, and we simply write $L \leftrightarrow R$ to mean that $L \xleftrightarrow{\omega} R$, where $\omega$ is the underlying percolation configuration.

\subsection{Outline of the paper}\label{s: outline}
In Section \ref{s: many touches}, we will introduce our key technical result, Proposition \ref{prop: main}, and we will explain why this result implies Theorem~\ref{thm: main}. In Section \ref{s: sketch}, we will present a sketch of the proof of Proposition \ref{prop: main}. In preparation for the proof of Proposition \ref{prop: main}, in Section \ref{s: finding mid-points}, we will establish a few self-contained lemmas. Section \ref{s: proof of prop} is the heart of this paper: there we will prove Proposition \ref{prop: main}. Section \ref{s: deriving theorems} is the denouement in which we will explain how to obtain Theorem \ref{thm: exp phi} from Theorem \ref{thm: main}. In the epilogue, Sections \ref{s: collecting mass} and \ref{sec:supercr-sharpn-mathb}, we will return to the classical setting of $\mathbb Z^d$ and explain how to recover Grimmett--Marstrand from supercritical sharpness.



\section{Many touches suffice}\label{s: many touches}
A key tool that is available in the study of percolation under the hypothesis that $p > p_c$, but not under the mere hypothesis that $\mathrm{P}_p(o\conn\infty )> 0$, is \emph{sprinkling}: one can often deduce a strong statement about percolation at $p$ from a weaker statement about percolation at $p' \in (p_c,p)$. In this section we will deduce Theorem \ref{thm: main} from the following a priori weaker proposition. The main goal of our paper is to prove this proposition. The proposition is formulated in terms of the following random variable. For a finite set of vertices $S$, define
\begin{align}
    \Psi(S)\coloneqq \big|\{e\in \partial S\colon e\xleftrightarrow{V\setminus S}\infty\}\big|.
\end{align}
\begin{proposition}\label{prop: main}
Let $G=(V,E)$ be an infinite connected transitive graph. Let $p\in[0,1]$ be such that $\mathrm{P}_p(o\conn\infty)>0$. For all $q\in (p,1)$ and for all $\eps\in (0,1]$, there exists $c>0$ such that for every finite set of vertices $S$ and for every positive integer $t$ with $t \leq c \, \Phi(S)$,
\begin{align}
    \mathrm{P}_q(\Psi(S)\ge t)\ge (1-\eps)^{t}.\label{eq: prop keq ineq}
\end{align}
\end{proposition}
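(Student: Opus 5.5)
Our plan is a multi-scale argument. Sprinkling from $p$ to $q$ will supply local connections, and the percolation at $p$ will supply connections to infinity.

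\textbf{Step 1: reformulate via a cutset.} By Menger's theorem, since $\Phi(S)\ge t/c$, there are at least $t/c$ edge-disjoint paths from $S$ to $\infty$. Our aim is to show that, with probability at least $(1-\eps)^t$ under $\mathrm{P}_q$, at least $t$ edges of $\partial S$ are joined to $\infty$ off $S$. We write $q=1-(1-p)(1-\delta)$ and realise $\omega_q=\omega_p\cup\omega_\delta$ with $\omega_p,\omega_\delta$ independent.

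\textbf{Step 2: an FKG product bound.} The first idea is to split the edge-disjoint paths into roughly $t$ groups $\Gamma_1,\dots,\Gamma_t$ of size $K=1/c$. Let $A_i$ be the event that some edge of $\partial S$ used by a path of group $i$ is connected to $\infty$ in $V\setminus S$. Each $A_i$ is increasing, so Harris--FKG gives
\[
\mathrm{P}_q\Bigl(\bigcap_i A_i\Bigr)\ge \prod_i \mathrm{P}_q(A_i).
\]
However, distinct groups could all connect through the same boundary edge, so we must require distinct edges. We handle this by choosing a separate edge $e_i\in\partial S$ for each group, which the edge-disjointness permits. It then suffices to prove $\mathrm{P}_q(A_i)\ge 1-\eps$ whenever $K$ is large.

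\textbf{Step 3: a single group succeeds with high probability (the main obstacle).} We must show that when $\Phi(S)$ is large, each boundary region supporting $K$ disjoint escape routes connects to $\infty$ with probability close to $1$. The difficulty is that transitivity and $\theta(p)>0$ only give $\mathrm{P}_p(x\leftrightarrow\infty)=\theta>0$ at each single vertex. Moreover, the paths in a group may be geometrically close, so their events are correlated, and an infinite cluster reached near $S$ may be forced back through $S$.

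Our first attempt is to choose the $K$ paths in a group far apart, using the cutset structure and the "finding mid-points" lemmas the paper announces. We would then use sprinkling: $\omega_\delta$ connects each path's start edge to an $\omega_p$-infinite cluster $\mathcal{C}$ nearby, off $S$. We would apply a Lévy/Borel--Cantelli-type argument: the events $\{x_j\leftrightarrow\infty \text{ off } S\}$ for well-separated $x_j$ are not independent, but by the uniqueness-free "$1-\eps$" bootstrap of Aizenman--Kesten--Newman type, we have
\[
\mathrm{P}_p\bigl(\text{none of } x_1,\dots,x_K \text{ connects to } \infty\bigr)\to 0
\]
as $K\to\infty$ for vertices going to infinity in separation. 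Making this quantitative uniformly in $S$, and controlling the constraint "off $S$" by exploiting that each path leaves $S$ and that infinite clusters cannot be cut off by $S$ when the relevant region is large, is where we expect most of the work to lie. The $\omega_\delta$ sprinkling is then used to glue each boundary edge to the nearby infinite cluster with probability bounded below, and we amplify to $1-\eps$ by taking $K$ large.
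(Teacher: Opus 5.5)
Your Steps 1 and 2 are a valid reduction. Edge-disjoint escape paths have distinct last-exit edges in $\partial S$. So if you had disjoint sets $E_1,\dots,E_t\subset\partial S$ with $\mathrm{P}_q(E_i\xleftrightarrow{V\setminus S}\infty)\ge 1-\eps$ for every $i$, Harris--FKG would finish the proof.

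\textbf{The gap.} Step 3 carries the entire difficulty and is not proved. In the form you state it, it is false: you claim that any set of $K$ boundary edges supporting $K$ edge-disjoint escape routes connects to $\infty$ off $S$ with probability close to $1$ once $K$ is large, uniformly in $S$.
\begin{itemize}
\item \emph{Counterexample.} In $\mathbb Z^3$, let $S$ be a large ball with a straight tunnel of cross-section $K$ and length $L$ drilled into it. Let $E$ be the $K$ edges at the bottom of the tunnel. These support $K$ edge-disjoint paths to $\infty$, running in parallel up the tunnel and out, and $\Phi(S)$ grows with the radius of the ball. However, the $L$ cross-sections of the tunnel are disjoint cutsets of size $K$ in $G[V\setminus S]$. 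Hence $\mathrm{P}_q(E\xleftrightarrow{V\setminus S}\infty)$ decays exponentially in $L$ for fixed $K$.
\item \emph{Why it fails.} $G[V\setminus S]$ is not transitive, so large cutsets in it do not give high connection probabilities.
\item \emph{Amplification fails.} The paths in a group can be forced to run close together. The asymptotic independence of $\{x_j\conn\infty\}$ for far-apart $x_j$ (which is elementary: FKG plus the product bound over disjoint balls) therefore cannot be used to amplify.
\item \emph{No uniform sprinkling bound.} There is no uniform lower bound on the probability that sprinkling glues a given boundary edge to an infinite $\omega_p$-cluster off $S$.
\end{itemize}
Your proposal gives no rule for choosing the groups so that Step 3 holds. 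Showing that order $\Phi(S)/K$ such groups exist in an arbitrary transitive graph would be a local, half-space-type statement. It does not follow in any evident way from $\theta(p)>0$ and transitivity.

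\textbf{The missing idea: adaptivity.} The paper never fixes in advance which boundary edges will become touches. It works in $\Lambda=\mathrm B_{R+1}$ and creates touches one at a time:
\begin{itemize}
\item Let $K$ be the explored cluster (initially $\partialin\Lambda$, standing in for $\infty$) and $D$ the union of seed-layer balls already inspected.
\item A Menger argument (Lemma~\ref{lem:finding_many_seeds}) gives $\ell$ disjoint $r$-balls avoiding $D$. Each connects to $\partialout S\setminus K$ and to $K$ with $\mathrm P_p$-probability at least $1-\sqrt\delta$.
\item An independent $\eta$-layer finds a fully open seed among them. Its $(\omega\setminus\overline K)$-cluster, glued to $K$ by sprinkling on $\partial K$, produces a new touch with conditional probability at least $1-\eps$, using only unrevealed edges.
\end{itemize}
The hypothesis $t\le c\,\Phi(S)$ is used only to ensure that $D$ and the touches found so far are too small to block every mid-ball. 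Each new mid-ball is chosen relative to the current explored set, and the $t$ steps are chained by conditioning rather than by an FKG product over fixed groups. So the argument never needs a high-probability statement about a prescribed set of boundary edges, which is exactly what your Step 3 would have to supply.
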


The proof that Proposition \ref{prop: main} implies Theorem \ref{thm: main} relies on a variant of a well-known sprinkling argument: if two $p$ clusters touch at many places, then the probability that they do not merge at $p+\eps$ is small. Here we use a variant of this statement, which asserts that a $p$-cluster has low probability to touch a $(p+\eps)$-cluster at many places without being contained in it. The proof itself is a simple elaboration of an argument from \cite[Theorem 1.1, $(ii)\Rightarrow (iii)$]{H22}.

\begin{proof}[Proof of Theorem \ref{thm: main} given Proposition \ref{prop: main}]
Let $p > p_c$. 
Choose $q \in (p_c,p)$ and let $\eps>0$ be such that $(1-q)(1-\eps)=(1-p)$. By Proposition~\ref{prop: main} applied with the same parameter $q$ but with $\eps/2$ and some $p' \in (p_c,q)$ in place of ``$\eps$" and ``$p$", there exists $h>0$ such that for every finite set of vertices $C$ and for every positive integer $t$ with $t\le h\, \Phi(C)$,
\begin{equation} \label{eq:many_touches_applied}
\mathrm{P}_q(\Psi(C)\ge t)\ge \big( 1 - \eps/2\big)^{t}.
\end{equation}

Now let $S$ be a finite set of vertices and let $n \geq 1/h$. We will show that the displayed equation in Theorem \ref{thm: main} holds with $c := h\eps/4 > 0$. This is sufficient to conclude our proof because if needed, one may reduce the constant $c$ to ensure that the statement also holds for $n\le 1/h$. Define $t :=  \lfloor h n \rfloor$, and note that $t$ is a positive integer satisfying $hn/2 \leq t \leq hn$. Let $\omega$ and $\zeta$ be independent percolation configurations with $\omega \sim \mathrm{P}_q$ and $\zeta \sim \mathrm{P}_\eps$. Note that $\omega \cup \zeta \sim \mathrm{P}_{p}$ thanks to our choice of $\eps$. Consider the event 
\begin{equation}
    \mathcal{E}\coloneqq\{S \centernot{\xleftrightarrow{\omega \cup \zeta}} \infty \} \cap  \big\{\big| \{ e \in \partial \mathcal C_{S}(\omega \cup \zeta) : e \xleftrightarrow{\omega } \infty \} \big| \geq t\big\}.
\end{equation}

On the one hand, by conditioning on $\cC_S(\omega \cup \zeta)$ and applying independence,
\begin{align}
    \mathbb{P}(\cE)&=\sum_{C}\mathrm{P}_p(\cC_S=C)\cdot \mathrm{P}_q(\Psi(C)\ge t), \label{eq: lb on cE 1st}
\end{align}
where the sum is over every finite set of vertices $C$. If such a set $C$ satisfies $\Phi(C)\ge n$, then by our choice of $h$, we know that $C$ also satisfies \eqref{eq:many_touches_applied}. 
So by restricting the sum in \eqref{eq: lb on cE 1st},
\begin{align}
    \mathbb{P}(\cE)&\ge \sum_{C\colon \Phi(C)\ge n}\mathrm{P}_p(\cC_S=C)\cdot \big( 1 - \eps/2\big)^{t}\\
    &=\mathrm{P}_p(S \centernot\leftrightarrow \infty \text{ and }\Phi(\cC_S)\ge n)\cdot \big( 1 - \eps/2\big)^{t}.\label{eq: lb on cE}
\end{align}

On the other hand,
\begin{align} 
    \mathbb{P}(\cE)\le \left(1-\eps\right)^{t}. \label{eq: ub on cE}
\end{align}
Indeed, this is essentially \cite[Proposition 2.1]{H22}, but let us anyway sketch a proof for completeness. Define
\[
    \mathcal K := \left\{ v \in V: v\xleftrightarrow{\omega}\infty\right\} \qquad \text{and} \qquad \mathcal M\coloneqq \cC_{S\backslash\cK}\left((\omega \cup \zeta) \setminus \overline \cK\right).
\]
Observe that
\[
    \cE=\{\cK\cap S=\varnothing\}\cap \{|\partial \cK\cap \partial \cM|\ge t\}\cap \{\zeta \cap \partial \cK\cap \partial \cM=\varnothing\}.
\]
Now a.s. if $\cK\cap S=\varnothing$ and $|\partial \cK\cap \partial \cM|\ge t$, then by independence,
\begin{align}
    \mathbb{P}(\cE\mid \cK, \cM)&=\mathbb{P}(\zeta \cap \partial\cK\cap \partial\cM=\varnothing\mid \cK, \cM)=\left(1-\eps\right)^{|\partial\cK\cap \partial\cM|}\le \left(1-\eps\right)^{t},\label{eq: sketch}
\end{align}
which immediately yields \eqref{eq: ub on cE} upon taking expectations.


Now by \eqref{eq: lb on cE}, \eqref{eq: ub on cE}, and the fact that $t \geq hn/2$, we obtain
\begin{align}
    \mathrm{P}_p(S\centernot\leftrightarrow\infty\text{ but }\Phi(\cC_S)\ge n)\le \left(\frac{1-\eps}{1-\eps/2}\right)^{t} \leq e^{- \frac{\eps t}{2}} \leq e^{-\frac{ \eps h n }{4}}= e^{-cn}.&\qedhere
\end{align}
\end{proof}

\section{Sketch of proof}\label{s: sketch}
In this section, let us present a sketch of the proof of Proposition \ref{prop: main}. Fix a connected finite set $S$ in the graph $G$ and consider percolation restricted to the complement of $S$. 
Call an edge $e\in \partial S$ a `touch' if it is connected to infinity (in this restricted configuration). Our proof relies on an exploration algorithm that witnesses these touches one after another. An important feature of our algorithm is that the set of revealed edges of our $p$-percolation remains connected throughout the process. We will first describe a toy version of this algorithm and then explain how to modify it for the final construction.

\paragraph{Toy algorithm.} Pick a vertex that is midway between $S$ and infinity, and explore its cluster $\cC_1$. Here \emph{midway} means that the vertex has constant probability to connect to both $S$ and $\infty$. Then pick a vertex that is midway between $S$ and $\mathcal{C}_1$, and explore its cluster $\cC_2$. Note that at this stage, $\cC_2$ can at best reach the external vertex boundary of  $\cC_1$ because the boundary of $\cC_1$ has been revealed to be closed. Consider all edges where $\cC_1$ and $\cC_2$ touch, and sprinkle here to glue these two clusters together with constant probability. Continue in this manner $t$ times, obtaining sets $\cC_1,\dots,\cC_t$, at each step choosing a point that is midway between $S$ and the previously explored sets. This construction guarantees that each exploration has at least constant probability $\alpha>0$ to connect to the previous explorations and witness a new touch. It follows that \[\mathrm{P}_q(\Psi(S)\geq t)\geq \alpha^t.\]

\paragraph{Improved algorithm.}  To improve the probability of creating a new touch from $\alpha$ to $1-\varepsilon$, we split each iteration into two: First, by revealing one large ball at a time, we search for a \emph{seed} (a fully open ball) that is midway between $S$ and the previous explorations. Then we explore the cluster of this seed. To ensure that the search for seeds does not interact with future cluster explorations, we look for seeds in a \emph{seed layer}, which is simply a low-density percolation that is independent of the original $p$-percolation.

\paragraph{Where does $\Phi$ come into play?}
Let us conclude this sketch with an important observation. In the toy algorithm, the set of all revealed edges is always connected. In contrast, in the improved algorithm, the search for seeds generates many \emph{bad} balls in the seed layer that are disconnected from each other yet contain negative information. Consequently, while the toy algorithm can always be run until it witnesses all possible touches (i.e.\! all edges in the exposed boundary of $S$), the improved algorithm may halt after finding only order $\Phi(S)$ touches if the union of the bad balls forms a cutset from $S$ to infinity. 
\vspace{0.5cm}
\begin{figure}[H]
\centering
\begin{subfigure}{0.48\textwidth}
    \centering
    \includegraphics[
        width=0.7\linewidth,
        trim={6cm 6cm 12cm 4cm}, 
        clip
    ]{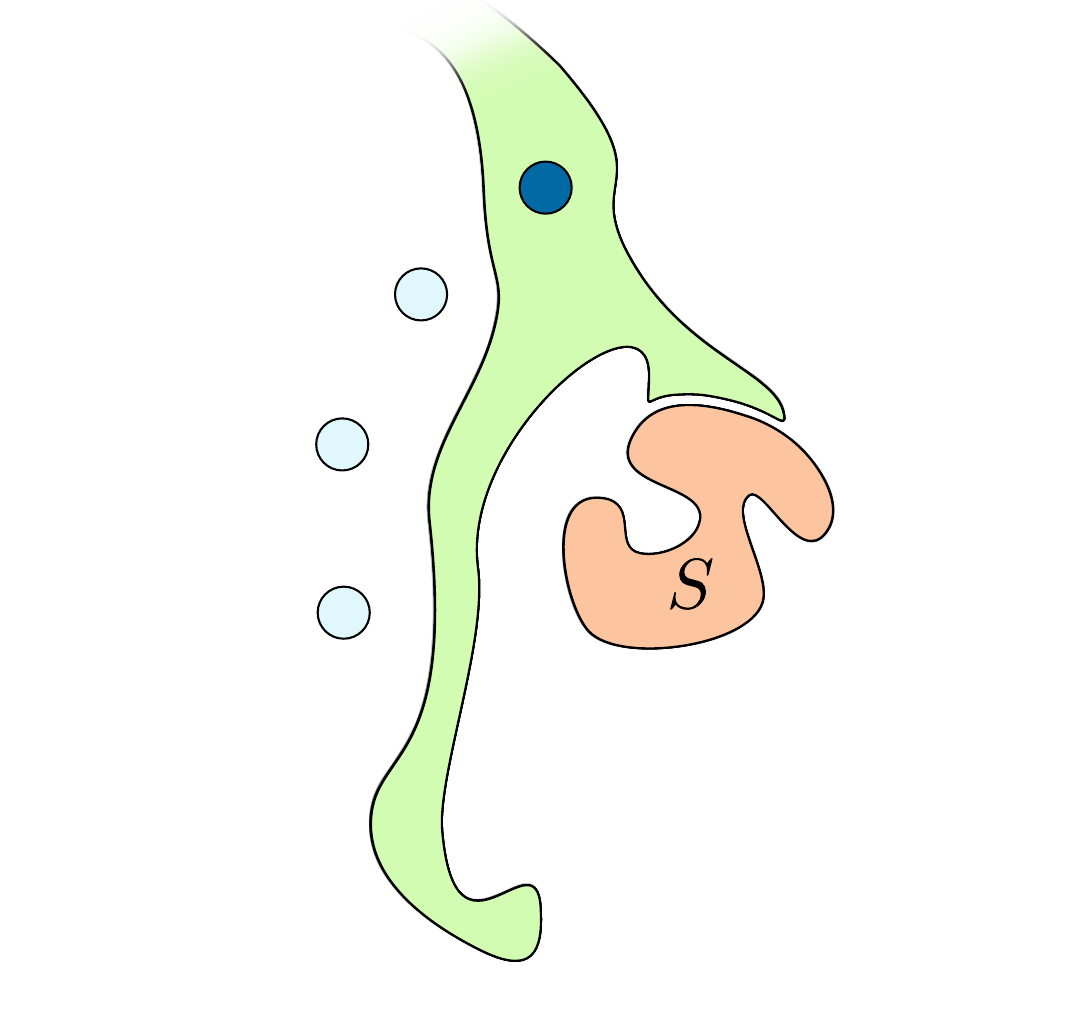}
\end{subfigure}
\hfill
\begin{subfigure}{0.48\textwidth}
    \centering
    \includegraphics[
        width=0.7\linewidth,
        trim={12cm 6cm 6cm 4cm}, 
        clip
    ]{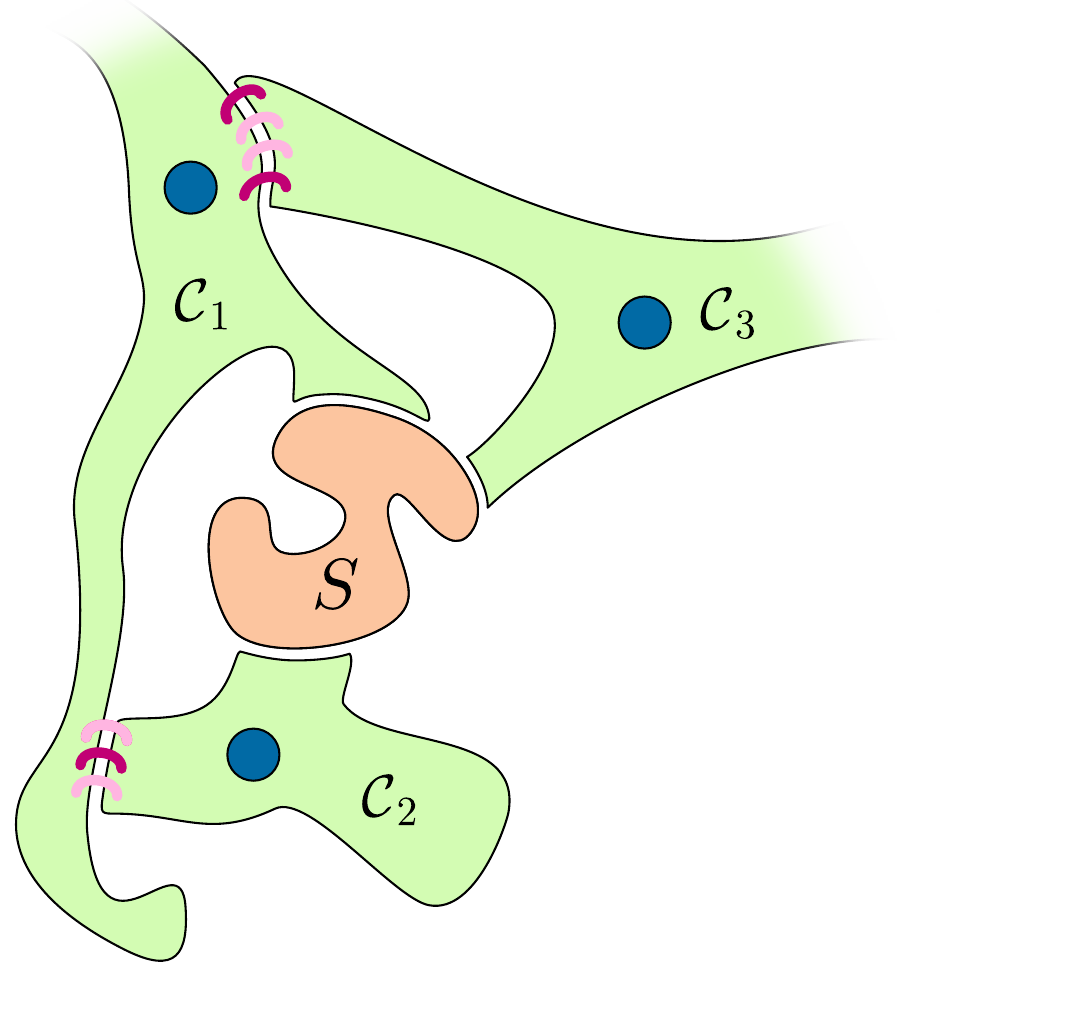}
\end{subfigure}

\caption{On the left, the blue balls are where the seed layer was revealed: each light blue ball was found to contain at least one closed edge, whereas the dark blue ball was found to be entirely open, forming a \emph{seed}. The green region is the $p$-cluster of the seed, which connects to infinity and touches the peach-coloured set $S$. On the right, we have repeated the process of finding a seed and exploring its cluster twice more. The pink edges represent the sprinkling used to glue each new green cluster to the existing green clusters: the light pink edges are closed, and the dark pink edges are open.}
\end{figure}

\section{Finding mid-points and mid-balls}\label{s: finding mid-points}

A standard observation in percolation theory is that for any sets of vertices $L$ and $R$ in a connected graph $G$, if every vertex is connected to $L \cup R$ with constant probability, then there exists a \textit{mid-point} vertex that has both constant probability to connect to $L$ and to $R$. (See \cite{EST25} for a recent example.) 

Now suppose that our graph contains not just one but many disjoint paths from $L$ to $R$. Then we can find not just one but many mid-points. In particular, even if we are given a not-too-large set of vertices $A$ that we seek to avoid, there will be at least one mid-point that is disjoint from $A$. In this section, we present a high probability version of the above argument where we construct \textit{mid-balls} instead of mid-points. We begin with the construction of one mid-ball that avoids a forbidden region $A$.

\begin{lemma} \label{lem:finding_one_seed}
    Let $G=(V,E)$ be a graph, and let $L$ and $R$ be disjoint sets of vertices. Let $r\ge 1$ and let $A$ be a set of vertices such that $\mathrm{B}_{r}(A)$ is not a vertex cutset from $L$ to $R$. Let $p, \delta \in [0,1]$ be such that for all $v \in V$,
    \begin{equation}\label{eq:deltahypothesis}
        \mathrm{P}_p(\mathrm{B}_{r-1}(v) \leftrightarrow L \cup R)\geq 1-\delta.
    \end{equation}
    Then there exists a ball $B$ of radius $r$ that is disjoint from $A$ and satisfies
    \begin{equation} \label{eq:halfway}
        \mathrm{P}_p(B \xleftrightarrow{V \backslash R} L), \ \mathrm{P}_p(B \xleftrightarrow{V \backslash L} R) \geq 1 - \sqrt{\delta}.
    \end{equation}
\end{lemma}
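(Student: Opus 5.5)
Since $\mathrm{B}_r(A)$ is not a vertex cutset from $L$ to $R$, I fix a path $\gamma = (v_0, v_1, \ldots, v_m)$ in $G$ with $v_0 \in L$, $v_m \in R$, and all vertices outside $\mathrm{B}_r(A)$. (If $L$ or $R$ meets $\mathrm{B}_r(A)$ we only need the interior of the path to avoid it, and I will handle the endpoints separately.) Every ball $\mathrm{B}_r(v_i)$ centred on the path is then disjoint from $A$. This is the source of admissible balls. The task is to show that one of them is a mid-ball, by the standard intermediate-value argument along a path.

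I will use the monotone quantities
\[ f(i) := \mathrm{P}_p\big(\mathrm{B}_r(v_i) \xleftrightarrow{V\setminus R} L\big), \qquad g(i) := \mathrm{P}_p\big(\mathrm{B}_r(v_i) \xleftrightarrow{V\setminus L} R\big). \]

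The key input is a ``near-$L$ or near-$R$'' dichotomy coming from \eqref{eq:deltahypothesis}. The event $\{\mathrm{B}_{r-1}(v)\leftrightarrow L\cup R\}$ is contained in the union of two events: $\mathrm{B}_{r-1}(v)$ reaches $L$ before $R$ along some open path, or it reaches $R$ before $L$. The first lies inside $\{\mathrm{B}_{r-1}(v)\xleftrightarrow{V\setminus R}L\}$, after truncating the path at its first hit of $L\cup R$. So for every $v$,
\[ \mathrm{P}_p\big(\mathrm{B}_{r-1}(v)\xleftrightarrow{V\setminus R} L \ \cup\ \mathrm{B}_{r-1}(v)\xleftrightarrow{V\setminus L} R\big)\ge 1-\delta. \]
Both events are increasing, so Harris--FKG applies to their complements:
\[ \mathrm{P}\big(\text{neither}\big)\ \ge\ \mathrm{P}\big(\mathrm{B}_{r-1}(v)\centernot{\xleftrightarrow{V\setminus R}} L\big)\cdot \mathrm{P}\big(\mathrm{B}_{r-1}(v)\centernot{\xleftrightarrow{V\setminus L}} R\big). \]
Writing $a, b$ for the two failure probabilities, this gives $ab \le \delta$, so $\min(a,b)\le\sqrt\delta$. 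Hence for each $v$ at least one of the two connection probabilities from $\mathrm{B}_{r-1}(v)$ is at least $1-\sqrt\delta$.

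Next I transfer this from radius $r-1$ at $v_i$ to radius $r$ at the neighbour $v_{i+1}$, using $\mathrm{B}_{r-1}(v_i)\subset \mathrm{B}_r(v_{i+1})$ and $\mathrm{B}_{r-1}(v_{i+1}) \subset \mathrm{B}_r(v_i)$. Let $i^*$ be the last index with $\mathrm{P}(\mathrm{B}_{r-1}(v_i)\xleftrightarrow{V\setminus R}L)\ge 1-\sqrt\delta$. Such an index exists because $v_0\in L$ makes this probability $1$.
\begin{itemize}
\item If $i^*=m$, then $v_m \in R$ gives a contradiction, since $\mathrm{B}_{r-1}(v_m)$ contains $v_m\in R$. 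The convention is that a set meeting $R$ is connected to $R$ avoiding $L$ trivially, while its connection to $L$ avoiding $R$ needs care. I will check the edge cases at the endpoints, which may require choosing $i^*$ among interior vertices.
\item Otherwise, by the dichotomy, $v_{i^*+1}$ satisfies $\mathrm{P}(\mathrm{B}_{r-1}(v_{i^*+1})\xleftrightarrow{V\setminus L}R)\ge 1-\sqrt\delta$.
\end{itemize}
Then the ball $B := \mathrm{B}_r(v_{i^*})$ contains $\mathrm{B}_{r-1}(v_{i^*})$ and also $\mathrm{B}_{r-1}(v_{i^*+1})$. Both required probabilities in \eqref{eq:halfway} are therefore at least $1-\sqrt\delta$ by monotonicity in the starting set. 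This is exactly why the hypothesis uses radius $r-1$ while the conclusion uses radius $r$.

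I expect the main obstacle to be the bookkeeping around the endpoints and the disjointness from $A$, rather than the probabilistic content. In particular I must ensure the chosen centre $v_{i^*}$ lies on a portion of the path that avoids $\mathrm{B}_r(A)$, so that $B$ is disjoint from $A$. The cutset definition only controls $L\setminus \mathrm{B}_r(A)$ and $R\setminus \mathrm{B}_r(A)$, so I will start the path at a vertex of $L\setminus\mathrm{B}_r(A)$ and end it at a vertex of $R\setminus\mathrm{B}_r(A)$, with all vertices outside $\mathrm{B}_r(A)$; every ball centred on it is then disjoint from $A$. The FKG step and the handling of $V\setminus R$ versus $V\setminus L$ restrictions for vertices lying in $L$ or $R$ should be routine checks.
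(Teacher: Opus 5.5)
Your proposal is correct and follows the paper's proof: take a path from $L\setminus \mathrm{B}_r(A)$ to $R\setminus \mathrm{B}_r(A)$ that avoids $\mathrm{B}_r(A)$, apply the FKG square-root dichotomy at radius $r-1$, and use the radius-$r$ ball at a consecutive pair of path vertices where the dichotomy switches. The one slip is that $i^*=m$ is not a contradiction in general, since $\mathrm{B}_{r-1}(v_m)$ can meet $V\setminus R$ when $r\ge 2$; the case is harmless, because you can either take $i^*$ to be the last such index in $\{0,\dots,m-1\}$, as the paper does, or note that $\mathrm{B}_r(v_m)$ itself works since $v_m\in R$.
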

%
\begin{proof}
Since $\mathrm{B}_{r}(A)$ does not form a vertex cutset from $L$ to $R$, there exists a path $v_1,\dots,v_k$ from $L$ to $R$ that does not visit $\mathrm{B}_{r}(A)$. Note that $\mathrm{B}_r(v_i)$ is disjoint from $A$ for all $i$. By the FKG inequality ($\sqrt{\mathsf{trick}}$) and~\eqref{eq:deltahypothesis}, for all $i$,
\[
    \mathrm{P}_p(\mathrm{B}_{r-1}(v_i) \xleftrightarrow{V \backslash R} L) \geq 1- \sqrt{\delta} \quad \text{or} \quad \mathrm{P}_p(\mathrm{B}_{r-1}(v_i) \xleftrightarrow{V \backslash L} R) \geq 1 - \sqrt{\delta}.
\]
Note that $v_1$ satisfies the first option and $v_k$ satisfies the second. So we can find some $i \in \{1,\dots,k-1\}$ such that
\[
    \mathrm{P}_p(\mathrm{B}_{r-1}(v_i) \xleftrightarrow{V \backslash R} L),\ \mathrm{P}_p(\mathrm{B}_{r-1}(v_{i+1}) \xleftrightarrow{V \backslash L} R) \geq 1 - \sqrt{\delta}.
\]
Since $\mathrm{B}_{r}(v_i)$ contains $\mathrm{B}_{r-1}(v_i)$ and $\mathrm{B}_{r-1}(v_{i+1})$, the ball $B := \mathrm{B}_r(v_i)$ satisfies \eqref{eq:halfway}.
\end{proof}

By adding a stronger cutset condition and repeatedly applying the above lemma, we next construct \emph{many} disjoint mid-balls between two sets.

\begin{lemma} \label{lem:finding_many_seeds}
    Let $G=(V,E)$ be a graph, and let $L$ and $R$ be disjoint sets of vertices. Let $r,\ell\ge 1$ and let $A$ be a finite set of vertices such that every vertex cutset from $L$ to $R$ has size at least
    \begin{equation} \label{eq:min_cutset_size}
        \sup_{v \in V} | \mathrm{B}_{2r}(v)| \cdot \big( |A| + \ell \big).
    \end{equation}
    Let $p,\delta\in [0,1]$ be such that \eqref{eq:deltahypothesis} is satisfied. Then there exist balls $B_1,\dots,B_{\ell}$ of radius $r$ that are disjoint from each other and from $A$ such that for all $i$
        \begin{equation} 
        \mathrm{P}_p(B_i \xleftrightarrow{V \backslash R} L), \ \mathrm{P}_p(B_i \xleftrightarrow{V \backslash L} R) \geq 1 - \sqrt{\delta}.\label{eq: ritvik}
    \end{equation} 
\end{lemma}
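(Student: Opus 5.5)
We construct the balls greedily by applying Lemma~\ref{lem:finding_one_seed} $\ell$ times, each time enlarging the forbidden region. Set $A_0 := A$. Suppose $0\le j<\ell$ and we have already found balls $B_1,\dots,B_j$ of radius $r$ that are pairwise disjoint, disjoint from $A$, and satisfy \eqref{eq: ritvik}. Define $A_j := A \cup B_1\cup\dots\cup B_j$. We apply Lemma~\ref{lem:finding_one_seed} with $A_j$ in place of $A$. The resulting ball $B_{j+1}$ of radius $r$ is disjoint from $A_j$, so it is disjoint from $A$ and from all earlier balls. It also satisfies \eqref{eq:halfway}, which is exactly \eqref{eq: ritvik}. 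The hypothesis \eqref{eq:deltahypothesis} does not depend on $A$, so it carries over unchanged. After $\ell$ steps we have the desired balls.

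The only thing to verify is the cutset hypothesis of Lemma~\ref{lem:finding_one_seed}: that $\mathrm{B}_r(A_j)$ is not a vertex cutset from $L$ to $R$. We do this with a cardinality bound. Write $D:=\sup_{v} |\mathrm{B}_{2r}(v)|$. If $D=\infty$, there is nothing to prove, since then no cutset can have size at least \eqref{eq:min_cutset_size} unless all cutsets are infinite; in that case the finite set $\mathrm{B}_r(A_j)$ is not a cutset as long as $A_j$ is finite, which it is. So assume $D<\infty$.

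First, $\mathrm{B}_r(A)=\bigcup_{a\in A}\mathrm{B}_r(a)$, so $|\mathrm{B}_r(A)|\le |A|\sup_v|\mathrm{B}_r(v)| \le |A|\,D$. Second, each $B_i=\mathrm{B}_r(v_i)$ satisfies $\mathrm{B}_r(B_i)\subset \mathrm{B}_{2r}(v_i)$, so $|\mathrm{B}_r(B_i)|\le D$. Hence
\[
|\mathrm{B}_r(A_j)|\le D\,(|A|+j)\le D\,(|A|+\ell-1) < D\,(|A|+\ell),
\]
where the strict inequality uses $D\ge1$. Every vertex cutset from $L$ to $R$ has size at least $D(|A|+\ell)$, so $\mathrm{B}_r(A_j)$ is too small to be one. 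The accounting for each ball is where the radius $2r$ in \eqref{eq:min_cutset_size} comes from: each previously found ball has radius $r$ and is thickened by a further $r$.

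The main subtlety is this bookkeeping. We must thicken the whole forbidden set, including the previously chosen balls, by $r$, and make sure the total stays strictly below the minimal cutset size. We also need to check that the strict/non-strict inequality works out at $j=\ell-1$, which it does because $D\ge 1$. Everything else follows directly from Lemma~\ref{lem:finding_one_seed}.
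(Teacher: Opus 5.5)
Your proposal is correct and matches the paper's proof. Both apply Lemma~\ref{lem:finding_one_seed} greedily with forbidden set $A\cup B_1\cup\dots\cup B_j$, and both verify the cutset hypothesis by covering $\mathrm{B}_r(A_j)$ with at most $|A|+\ell-1$ balls of radius $2r$ centred at points of $A$ and at the earlier centres, which keeps its size strictly below the threshold \eqref{eq:min_cutset_size}; your separate handling of $\sup_v|\mathrm{B}_{2r}(v)|=\infty$ is harmless, and the paper sidesteps it by bounding with the maximum over this finite set of centres.
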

\begin{proof}
    By induction, it suffices to show that for all $i \in \{1,\dots,\ell\}$, given any balls $B_1,\dots,B_{i-1}$ of radius $r$, there exists a ball $B_i$ of radius $r$ that satisfies \eqref{eq:halfway} and is disjoint from
    \[
        A_i := A \cup B_1 \cup \cdots \cup B_{i-1}.
    \]
    Lemma \ref{lem:finding_one_seed} guarantees that such a ball $B_i$ exists if we are able to verify that $\mathrm{B}_{r}(A_i)$ is not a vertex cutset from $L$ to $R$. Let $u_j$ be the centre of the ball $B_j$ for each $j \leq i-1$, and let $U := A \cup \{u_1,\dots, u_{i-1}\}$. Since $\mathrm{B}_r(A_i) \subset \mathrm{B}_{2r}(U)$ and $|U| < |A| + \ell$,
    \[
    |\mathrm{B}_r(A_i)|\le \max_{v \in U} | \mathrm{B}_{2r}(v)|\cdot |U|<\sup_{v \in V} |\mathrm{B}_{2r}(v)|\left(|A|+\ell\right).
    \]
   So by \eqref{eq:min_cutset_size}, $\mathrm{B}_r(A_i)$ is indeed not a vertex cutset from $L$ to $R$.
\end{proof}

When we apply Lemma \ref{lem:finding_many_seeds} in the proof of Proposition \ref{prop: main}, edges on the boundary of the set called $R$ will have already been revealed to be closed. Nevertheless, we will want to connect a mid-ball given by Lemma \ref{lem:finding_many_seeds} to $R$ with high probability. To salvage the situation, we will sprinkle by a small amount $\eta > 0$ on the boundary of $R$ and apply the following standard observation about the (non-)effect of thinning on connection probabilities.

\begin{lemma}\label{lem: sprinkling} 
For all $p,\eta \in (0,1)$ and $\varepsilon >0$, there exists $\alpha>0$ such that the following holds. Let $B$ and $R$ be sets of vertices in a graph $H=(V,E)$. Consider independent configurations
\begin{equation}
\omega\sim \mathrm{P}_p^{V\setminus R} \quad \text{and} \quad \xi \sim \mathrm{P}_\eta^{\partial R}. 
\end{equation}
If $\mathrm{P}_p(B\leftrightarrow R)\geq 1-\alpha$, then
\[
    \mathbb{P}(B\xleftrightarrow[]{\,\omega \cup \xi\,} R)\geq 1-\varepsilon.
\]
\end{lemma}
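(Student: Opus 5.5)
Since every open path from $B$ to $R$ must first enter $R$ through an edge of $\partial R$, the event $\{B\leftrightarrow R\}$ is determined by the edges in $(V\setminus R)^\circ\cup\partial R$. The plan is therefore to condition on the percolation inside $V\setminus R$ and count how many boundary edges of $R$ are reachable from $B$. If $B\cap R\neq\varnothing$ there is nothing to prove, so assume $B\subset V\setminus R$. For a configuration $\omega$ on $(V\setminus R)^\circ$, define
\[
N(\omega):=\big|\{e=\{x,y\}\in\partial R\colon x\notin R,\ B\xleftrightarrow{\omega}x\}\big|,
\]
the number of boundary edges of $R$ whose outer endpoint lies in the $\omega$-cluster of $B$. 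This quantity depends only on $\omega$, and $\omega$ has law $\mathrm{P}_p^{V\setminus R}$, which is the same as the law of $\mathrm{P}_p$ restricted to $(V\setminus R)^\circ$.

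The key observation is that, conditionally on $\omega$, the set $B$ is connected to $R$ exactly when at least one of these $N(\omega)$ edges is open. The boundary edges are independent of $\omega$, so:
\begin{itemize}
\item under $\omega\cup\xi$, with boundary edges open at rate $\eta$,
\[
\mathbb{P}\big(B\centernot{\xleftrightarrow{\omega\cup\xi}}R\,\big|\,\omega\big)=(1-\eta)^{N(\omega)};
\]
\item under $\mathrm{P}_p$, with boundary edges open at rate $p$,
\[
\mathrm{P}_p(B\centernot\leftrightarrow R\mid \omega)=(1-p)^{N(\omega)}.
\]
\end{itemize}

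Now fix $k$ large enough that $(1-\eta)^k\le \varepsilon/2$, and set $\alpha:=\tfrac{\varepsilon}{2}(1-p)^k$. This depends only on $p,\eta,\varepsilon$. From the $\mathrm{P}_p$ identity,
\[
\alpha\ \ge\ \mathrm{P}_p(B\centernot\leftrightarrow R)\ \ge\ (1-p)^{k}\,\mathrm{P}_p(N<k),
\]
so $\mathrm{P}(N<k)\le \alpha(1-p)^{-k}=\varepsilon/2$. From the $\omega\cup\xi$ identity,
\[
\mathbb{P}\big(B\centernot{\xleftrightarrow{\omega\cup\xi}}R\big)\le \mathbb{P}(N<k)+(1-\eta)^k\le \varepsilon,
\]
which is the claim.

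The only step that needs care is the measurability bookkeeping. We must check that $N$ is a function of the interior edges alone, and that $\mathrm{P}_p(B\leftrightarrow R)$ in the full graph $H$ coincides with the probability of connecting through $(V\setminus R)^\circ$ followed by a single edge of $\partial R$. Both follow from the first-entrance decomposition of a path from $B$ to $R$: the portion before it first enters $R$ uses only edges of $(V\setminus R)^\circ$, followed by a single edge of $\partial R$. Beyond this, there is no real obstacle.
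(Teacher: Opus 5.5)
Your proposal is correct and follows the paper's proof: the paper also lets $T$ be the number of edges of $\partial R$ connected to $B$ in $\omega$, and writes the two non-connection probabilities as $\mathbb{E}[(1-p)^T]$ and $\mathbb{E}[(1-\eta)^T]$. It then makes the same choices (threshold $t$ with $(1-\eta)^t\le\varepsilon/2$, then $\alpha\le\frac{\varepsilon}{2}(1-p)^t$), and your first-entrance remark just spells out the identity the paper states without comment.
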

\begin{proof}
The conclusion is trivial if $B$ intersects $R$, so let us assume that $B$ does not intersect $R$.
Let $T$ be the (random) number of edges $e\in \partial R$ such that $e\xleftrightarrow{\omega}B$. Note that
\begin{align}
    \mathrm{P}_p(B\centernot \leftrightarrow R)=\mathbb{E}\left[(1-p)^T\right]\quad\text{and}\quad\mathbb{P}(B\centernot{\xleftrightarrow{\omega\cup \xi}}R)=\mathbb{E}\left[(1-\eta)^T\right].
\end{align}
In particular, for any constant $t\ge 0$,
\begin{align}
    \mathrm{P}_p(B\centernot \leftrightarrow R)\ge (1-p)^t \cdot \mathbb{P}(T\le t),
\end{align}
and hence for any $\alpha>0$ satisfying $\mathrm{P}_p(B\centernot \leftrightarrow R)\le \alpha$,
\begin{align}
    \mathbb{P}\left(B\centernot{\xleftrightarrow{\omega\cup \xi}}R\right)&=\mathbb{E}\left[(1-\eta)^T\mathrm{1}_{T\le t}\right]+\mathbb{E}\left[(1-\eta)^T\mathrm{1}_{T>t}\right]\\
    &\le \mathbb{P}(T\le t)+(1-\eta)^t\le \frac{\alpha}{(1-p)^t}+(1-\eta)^t.
\end{align}
It now suffices to first pick $t$ large enough to ensure that $(1-\eta)^t\le \frac{\eps}{2}$, then pick $\alpha>0$ small enough to ensure that $\frac{\alpha}{(1-p)^t}\le\frac{\eps}{2}$.
\end{proof}

\section{Proof of Proposition \ref{prop: main}}\label{s: proof of prop}

Let $q \in (p,1)$ and $\varepsilon\in (0,1]$. 
Our goal is to find a constant $c>0$ such that for every finite set of vertices $S$ and for every positive integer $t$ with $t\le c\, \Phi(S)$,
\begin{align}
 \mathrm{P}_q(\big|\{e\in \partial S\colon e\xleftrightarrow{V\setminus S}\infty\}\big|\ge t)=\mathrm{P}_q\big(\sum_{e\in \partial S}\mathrm{1}\{e\xleftrightarrow{V\setminus S}\infty\}\ge t\big)\ge (1-\eps)^{t}. \label{eq:propkeqineq}
\end{align}

The proof is organised as follows: In Section~\ref{sec:41}, we will define various constants and ultimately the desired constant $c>0$.  These constants depend only on $G$, $p$, $q$, and $\varepsilon$.  In Section~{\ref{sec:42}}, we will reduce our goal to a finite volume version. The core content of the proof is then presented in Sections \ref{sec:43}--\ref{s: creating}.

\subsection{Definition of the constant \texorpdfstring{$c$}{c}}\label{sec:41}
Pick $\eta\in (0,1)$ such that
\begin{align}\label{eq: poor equation}
    (1-p)(1-\eta)^2= 1-q,
\end{align}
so that the union of $p$-percolation and two copies of $\eta$-percolation has the law of $q$-percolation. Let \begin{equation}\label{eq: delta}
    \delta\coloneqq \min(\alpha,\eps/4)^2,
\end{equation} where $\alpha$ is the constant guaranteed by Lemma \ref{lem: sprinkling} applied with $p$, $\eta$, and $\eps/4$ in place of ``$\eps$''. Pick $r\ge 1$ such that 
\begin{align}\label{eq: r}
    \mathrm{P}_p(\mathrm{B}_{r-1}\conn \infty)\ge 1-\delta,
\end{align}
which exists because $\mathrm{P}_p(o\conn \infty)>0$ and hence $\lim_{R\to\infty}\mathrm{P}_p(\mathrm{B}_R\conn \infty)=1$. Define
\[
    b := \max\left(| \mathrm{B}_r^\circ|, | \mathrm{B}_r|\right),
\]
and choose $\ell\ge 1$ such that
\begin{align}\label{eq: eta}
    \big(1-\eta^{b}\big)^\ell\le \frac{\eps}{2}.
\end{align}
In particular, this ensures that if we do $\eta$-percolation on $\ell$-many disjoint balls of radius $r$, then with probability at least $1-\eps/2$, we can find a ball that is entirely open.
Let $d$ be the vertex degree of $G$, and note that $d \geq 2$.
We will show that we can take 
\begin{align}\label{eq: choice of c}
    c\coloneqq \frac{1}{4b^3 d \ell}.
\end{align}

\subsection{Reduction to finite volume} \label{sec:42}
 Let $S$ be a finite set of vertices, and let $t$ be a positive integer such that $t \leq c\, \Phi(S)$ (which we may assume exists; otherwise the conclusion holds vacuously). Let us start by reducing \eqref{eq:propkeqineq} to a statement about percolation in a large ball.
A.s., for each edge $e\in \partial S$, the indicator of the event $\big\{e\xleftrightarrow{V\setminus S}\partial \mathrm{B}_{R+1}\big\}$ is decreasing in $R$ when $\mathrm{B}_R \supset S$, and converges to the indicator of the event $\big\{e\xleftrightarrow{V\setminus S}\infty\big\}$ as $R\to\infty$. So it suffices to show that for all $R$ such that $\mathrm{B}_R\supset S$,
\begin{align}
    \mathrm{P}_q\big(\sum_{e\in \partial S}\mathrm{1}\{e\xleftrightarrow{V\setminus S}\partial\mathrm{B}_{R+1}\}\ge t\big)\ge (1-\eps)^{t}.\label{eq: reducing to ball}
\end{align}

For the rest of the proof, fix $R$ such that $\mathrm{B}_{R}\supset S$, and set $\Lambda\coloneqq \mathrm{B}_{R+1}$. Let $\omega$, $\xi,$ and $\zeta$ be independent (random) configurations such that 
\[
\omega \sim \mathrm{P}_p^{\Lambda\setminus S}\quad \text{ and }\quad \xi,\zeta\sim\mathrm{P}_{\eta}^{\Lambda\setminus S}.
\]
Note that by \eqref{eq: poor equation}, $\omega\cup \xi\cup \zeta\sim \mathrm{P}_q^{\Lambda\setminus S}$. So it suffices to show that
\begin{align}
    \mathbb{P}\big( |\partial \cC_{\partial \Lambda} \cap \partial S| \ge t\big)\ge (1-\eps)^{t},\label{eq: key estimate}
\end{align}
where $\cC_{\partial \Lambda}$ is the set of all vertices in $\Lambda \backslash S$ that are connected to $\partial \Lambda$ in $\omega \cup \xi \cup \zeta$.

\subsection{It suffices to create touches one by one}\label{sec:43}

In this section, we deduce \eqref{eq: key estimate} from the following lemma, which isolates the relevant properties of a sequence of clusters that we will construct by the successive explorations described in Section \ref{s: sketch}. We will prove the lemma itself in the subsequent subsections.
\begin{lemma}\label{lem: induction}
There exists a sequence of (random) sets of vertices $\cC_0,\dots,\cC_t \subset  \cC_{\partial \Lambda}$ such that the following holds. For each $i\in \{0,\ldots, t\}$, define
\begin{equation} \label{eq:def_of_Ni}
    N_i := |\partial (\cC_0 \cup \dots \cup \cC_i) \cap \partial S|.
\end{equation}
Then for all $i\in \{1,\ldots, t\}$, we have
\begin{equation}\label{eq:touches_++}
    \mathbb{P} ( N_{i-1} < N_i  \mid N_0 < \cdots < N_{i-1} < t) \geq 1-\eps,
\end{equation}
or the event being conditioned on has probability zero.
\end{lemma}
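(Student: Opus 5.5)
We will construct the sets $\cC_i$ by an exploration algorithm in which, at each stage, the revealed information is recorded in a filtration $\mathcal F_0\subset\mathcal F_1\subset\cdots$. The sets $\cC_0,\dots,\cC_i$ are $\mathcal F_i$-measurable, and every unrevealed edge is still distributed as fresh percolation. We take $\cC_0$ to be the trivial exploration: the vertices of $\Lambda\setminus S$ adjacent to $\partial\Lambda$, that is, $\partialin\Lambda\setminus S$. These lie in $\cC_{\partial\Lambda}$ automatically.

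At stage $i$, let $D_{i-1}:=\cC_0\cup\dots\cup\cC_{i-1}$, and write $F_{i-1}$ for the union of the bad balls revealed so far in the seed layer $\zeta$. We apply Lemma~\ref{lem:finding_many_seeds} with $L:=\partialout S$ (the outer neighbours of $S$), $R:=D_{i-1}$, forbidden set $A:=F_{i-1}$, radius $r$, and parameter $\ell$. This is applied in the graph $G[\Lambda\setminus S]$ with $\omega$ restricted to the unrevealed region.

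Hypothesis \eqref{eq:deltahypothesis} holds by \eqref{eq: r}. Indeed, any ball's connection to $\infty$ must pass either through $S$'s neighbourhood or reach $\partial\Lambda\subset R$, so it connects to $L\cup R$ with probability at least $1-\delta$. One subtlety is that $R$ must contain all the already revealed edges; these carry only negative information, since closed boundary edges can be absorbed into $R$.

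The cutset hypothesis \eqref{eq:min_cutset_size} is where $\Phi(S)$ enters. We have $|F_{i-1}|\le b\ell(i-1)\le b\ell t$. Moreover, on $\{N_{i-1}<t\}$, the sets already explored plus their closed boundaries can be cut off using fewer than $t$ touches. Any vertex cutset $K$ from $L$ to $R$ yields an edge cutset from $S$ to $\infty$ of size at most $d|K|+N_{i-1}$. Hence every vertex cutset has size at least $(\Phi(S)-t)/d$, and the choice \eqref{eq: choice of c} gives $(\Phi(S)-t)/d\ge b(b\ell t+\ell)$, using $|\mathrm{B}_{2r}|\le b^2$.

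So we obtain $\ell$ disjoint mid-balls $B_1,\dots,B_\ell$ avoiding $F_{i-1}$. We reveal $\zeta$ on them one by one. By \eqref{eq: eta}, with probability at least $1-\eps/2$ some ball is fully $\zeta$-open, and this is our seed $B$. The failed balls are added to $F_i$.

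Next we explore the $\omega$-cluster of $B$ in $V\setminus D_{i-1}$; this $\omega$ is still fresh there. We then sprinkle $\xi$ on $\partial D_{i-1}$ and apply Lemma~\ref{lem: sprinkling} with $\delta\le\alpha^2$. By \eqref{eq:halfway} and the choice \eqref{eq: delta}, the seed connects to $D_{i-1}$ in $\omega\cup\xi$ with probability at least $1-\eps/4$. Separately, the seed connects to $L$ avoiding $R$ in $\omega$ with probability at least $1-\sqrt\delta\ge1-\eps/4$.

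We set $\cC_i$ to be this explored cluster together with $B$. Then $\cC_i\subset\cC_{\partial\Lambda}$ because it is glued to $D_{i-1}$, which connects to $\partial\Lambda$. Reaching $L$ gives a new edge of $\partial S$ that was not previously counted, because the exploration avoided $D_{i-1}$. Therefore $N_i>N_{i-1}$.

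A union bound gives the conditional probability $1-\eps/2-\eps/4-\eps/4=1-\eps$.

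The main obstacle is the bookkeeping that keeps fresh randomness available at each stage. The $\omega$-edges near $F_{i-1}$ are untouched, since bad balls only reveal $\zeta$. The $\omega$-boundary of $D_{i-1}$ is closed, which is why $\xi$-sprinkling is used only on $\partial D_{i-1}$. A further point to check is that edges of $\xi$ revealed in earlier sprinklings lie in $D_{i-1}^\circ\cup\partial D_{i-1}$, and the new $\partial D_{i-1}$ edges are fresh.

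Another delicate point is justifying that the cutset bound uses only $N_{i-1}<t$. I would handle it by noting that the components of $D_{i-1}$ reachable from $S$ in the complement meet $S$ only via the $N_{i-1}$ counted touches.
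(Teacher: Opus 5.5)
This is essentially the paper's proof: mid-balls from Lemma~\ref{lem:finding_many_seeds} in $G[\Lambda\setminus S]$, justified by the cutset bound $d|W|+N_{i-1}\ge\Phi(S)$; a search for a fully $\zeta$-open seed among $\ell$ disjoint balls; exploration of its cluster in $\omega\setminus\overline{D_{i-1}}$; fresh $\xi$-sprinkling on $\partial D_{i-1}$; and a union bound. Forbidding only previously failed balls, rather than all previously proposed balls as the paper does, is harmless, since earlier seeds carry only positive $\zeta$-information.

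Three small repairs are needed:
\begin{enumerate}
\item Take $L:=\partialout S\setminus D_{i-1}$. Lemma~\ref{lem:finding_many_seeds} requires $L\cap R=\varnothing$, and its square-root step fails otherwise. Your cutset bound with $N_{i-1}$ already presupposes this choice.
\item Reveal $\xi$ only on the edges of $\partial D_{i-1}$ adjacent to the new $\omega$-cluster. These edges then lie in $D_i^\circ$, so $\xi\cap\partial D_i$ stays fresh at the next stage.
\item Set $\cC_i:=\varnothing$ and halt whenever a stage fails, i.e.\ when no seed is found or the cluster misses $D_{i-1}$ or $L$. Otherwise $\cC_i\subset\cC_{\partial\Lambda}$ is not guaranteed.
\end{enumerate}
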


\begin{proof}[Completing the proof of Proposition \ref{prop: main} given Lemma~\ref{lem: induction}] Let $\cC_0,\dots,\cC_t$ be the sequence given by Lemma \ref{lem: induction}, and for all $i$ let $N_i$ be as in \eqref{eq:def_of_Ni}. For each $i\in \{0,\ldots, t\}$, consider the event
\[
   \mathcal E_i:=\{  N_0 < \cdots < N_i \} \cup\{ N_{i-1} \geq t\},
\]
where we set $N_{-1} := 0$. Let $i \geq 1$.
By \eqref{eq:touches_++},
\[
    \mathbb P(N_0 < \dots < N_i \text { and } N_{i-1} < t) \geq (1-\eps) \cdot \mathbb P(N_0 < \dots < N_{i-1}< t). \label{eq: ni desc}
\]
Since $N_{i-2}\leq N_{i-1}$, we have the inclusion
\begin{equation}
     \mathcal{E}_{i-1}\subset \{N_0<\cdots<N_{i-1}<t\}\cup \{N_{i-1}\geq t\}. \label{eq: inclusion stuff}
\end{equation}
Therefore, by applying the definition of $\mathcal E_i$ and by \eqref{eq: ni desc} and \eqref{eq: inclusion stuff}, 
\begin{align}
    \mathbb P(\mathcal E_i) &= \mathbb P(N_0 < \dots < N_i \text { and } N_{i-1} < t) + \mathbb P(N_{i-1} \geq t) \\
    &\geq (1-\eps) \cdot \big( \mathbb P(N_0 < \dots < N_{i-1}< t) + \mathbb P(N_{i-1} \geq t)\big)\\
    &\geq (1-\varepsilon)\cdot \mathbb{P}(\mathcal{E}_{i-1}).
\end{align} 
In particular, noting that $\mathcal E_t \subset \{N_t \geq t\}$ and $\mathbb{P}(\mathcal{E}_0)=1$,
\begin{align}
    \mathbb P( |\partial C_{\partial \Lambda} \cap \partial S| \ge t) \geq \mathbb P(\mathcal E_t) \geq (1-\eps)\cdot \mathbb P(\mathcal E_{t-1}) \geq \cdots \geq (1-\eps)^t. 
\end{align}
This establishes \eqref{eq: key estimate} and thus concludes the proof of the proposition.
\end{proof}

\subsection{Choosing potential seeds}\label{sec:44}
In this subsection, we apply Lemma~\ref{lem:finding_many_seeds} to construct a function $\mathsf{Seeds}$ that we will use to produce a collection of mid-balls between what has already been explored and the fixed set $S$. We emphasise that the function $\mathsf{Seeds}$ appearing in this lemma is purely deterministic. Let us fix a choice of such a function for all subsequent subsections. Given a set of vertices $A$, let $\mathcal{P}(A)$ denote the set of all subsets of $A$. 

 
\begin{lemma}\label{lem:seeds}
There exists a function
\begin{equation}
     \mathsf{Seeds} : \mathcal P(\Lambda \backslash S)^2 \to \mathcal{P}(\Lambda \backslash S)^{\ell}
\end{equation}
with the following property. For every input $(D,K)$ satisfying
\begin{equation}\label{eq:sat}
K \supset \partialin \Lambda, \quad |\partial K \cap \partial S| < t, \quad \text{and} \quad |D| \leq b\ell t,
\end{equation}
the output is a sequence of sets $B_1,\dots,B_\ell$ that are disjoint from each other and from $D$ such that for all $i$, the set $B_i$ is a ball of radius $r$ with respect to the graph $G[\Lambda\backslash S]$ and satisfies
\begin{equation} \label{eq:halfway_in_our_context}
    \mathrm{P}_p\bigl(B_i \xleftrightarrow[]{\Lambda\setminus (K \cup S)} \partialout S\setminus K\bigr),\, \mathrm{P}_p\bigl(B_i \xleftrightarrow[]{\Lambda\setminus S} K\bigr)
\ge 1-\sqrt{\delta}.
\end{equation}
\end{lemma}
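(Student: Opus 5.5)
The plan is to apply Lemma~\ref{lem:finding_many_seeds} in the graph $H := G[\Lambda\setminus(S\cup K)]$ augmented appropriately. For each admissible input $(D,K)$ we make a deterministic choice of balls, and we set $\mathsf{Seeds}(D,K)$ to be an arbitrary fixed sequence otherwise. Concretely, work in the graph $H=G[\Lambda\setminus S]$ and take $L := \partialout S\setminus K$ (vertices of $\Lambda\setminus S$ adjacent to $S$, outside $K$) and $R := K$. To get connections avoiding $K$ to $L$, it is cleaner to run the lemma in the graph $H' := G[\Lambda\setminus S]$ with $R=K$. Then Lemma~\ref{lem:finding_many_seeds} gives $\mathrm{P}_p(B_i\xleftrightarrow{V(H')\setminus K}L)$ and $\mathrm{P}_p(B_i\xleftrightarrow{V(H')\setminus L}K)$, and both imply the two bounds in \eqref{eq:halfway_in_our_context} by monotonicity. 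We take $A := D$. Balls of radius $r$ in $H'$ disjoint from $D$ are exactly what is required. Since balls in $H'$ are contained in those of $G$, we have $\sup_v|\mathrm{B}^{H'}_{2r}(v)|\le |\mathrm{B}_{2r}|$. So it suffices to bound this by something like $b^2$, using $\mathrm{B}_{2r}\subset \mathrm{B}_r(\mathrm{B}_r)$, which gives $|\mathrm{B}_{2r}|\le b^2$.

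\textbf{Hypothesis \eqref{eq:deltahypothesis}.} Let $v\in\Lambda\setminus S$. We need $\mathrm{P}_p^{H'}(\mathrm{B}^{H'}_{r-1}(v)\leftrightarrow L\cup K)\ge 1-\delta$. Compare with \eqref{eq: r} in $G$. With probability $\ge1-\delta$, $\mathrm{B}_{r-1}(v)$ connects to $\infty$ in $G$, hence to $\partial\Lambda$ (recall $v\in\Lambda$). Take such an open path and stop it at the first time it exits $\Lambda\setminus S$ or enters $K$. Since $\partialin\Lambda\subset K$, an exit through $\partial\Lambda$ must first hit $K$. An exit into $S$ is preceded by a vertex of $\partialout S$, which lies in $L$ or in $K$. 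If the starting point itself lies in $S$, the ball $\mathrm{B}_{r-1}(v)$ in $G$ is bigger than the one in $H'$. This is the delicate point. The fix is that the relevant portion of the path from the last exit of $\mathrm{B}_{r-1}(v)$ until it reaches $L\cup K$ stays in $H'$ and starts at a vertex within $G$-distance $r-1$ of $v$. That is not necessarily within $H'$-distance $r-1$, however. To handle it, I would note that if a $G$-geodesic from $v$ of length $\le r-1$ meets $S$, then its last vertex before $S$ is in $\partialout S\subset L\cup K$ and lies in the $H'$-ball. In that case the connection event holds trivially, since $\mathrm{B}^{H'}_{r-1}(v)$ already meets $L\cup K$. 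Otherwise, the path segments used stay in $H'$. So in all cases \eqref{eq:deltahypothesis} holds in $H'$, using that the configuration restricted to $H'$ has law $\mathrm{P}_p^{H'}$.

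\textbf{Cutset condition (the main obstacle).} We must show that every vertex cutset $Z$ from $L$ to $K$ in $H'$ satisfies $|Z|\ge b^2(b\ell t+\ell)$. It is enough that $|Z|\ge 2b^3\ell t$, since $t\ge1$. Given such a $Z$, let $Y$ be the union of $S$, $Z$, and the vertices of $\Lambda\setminus S$ not connected to $K$ in $H'\setminus Z$. Then $Y$ is a finite set containing $S$, and its edge boundary consists of three kinds of edges. First, edges incident to $Z$, of which there are at most $d|Z|$. Second, edges of $\partial K\cap\partial S$, of which there are fewer than $t$. Third, boundary edges into $K$ from the non-$K$ side, which must be incident to $Z$. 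Any $S$-boundary edge landing in $L$ is absorbed, because $L\setminus Z$ is disconnected from $K$. Hence $\Phi(S)\le|\partial Y|\le d|Z|+t$. Combined with $t\le c\Phi(S)=\Phi(S)/(4b^3d\ell)$, this gives $d|Z|\ge 4b^3d\ell t-t$, so $|Z|\ge 2b^3\ell t$ as needed. Here we should double-check that $L$ and $K$ are disjoint (true by definition) and handle the trivial case $L=\varnothing$, which is impossible since then $\partial S\subset\partial K\cap\partial S$ would have size $<t\le\Phi(S)$. With all hypotheses verified, Lemma~\ref{lem:finding_many_seeds} produces the balls, and we fix one choice for each admissible input to define $\mathsf{Seeds}$.
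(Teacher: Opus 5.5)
Your proposal is correct and follows the paper's proof. Like the paper, you apply Lemma~\ref{lem:finding_many_seeds} in $G[\Lambda\setminus S]$ with $A=D$, $L=\partialout S\setminus K$, $R=K$ and verify two things: \eqref{eq:deltahypothesis} via \eqref{eq: r} together with the observation that a ball not contained in $\Lambda\setminus S$ already meets $L\cup K$, and the cutset condition by turning a vertex cutset $Z$ into the edge cutset $\overline Z\cup(\partial K\cap\partial S)$ from $S$ to infinity, which your set $Y$ simply makes explicit. The only slip is that your boundary-case fix treats only balls that meet $S$; balls that leave $\Lambda$ need the identical argument using $\partialin\Lambda\subset K$, and the paper handles both cases at once by asking whether $\mathrm{B}^{G[\Lambda\setminus S]}_{r-1}(v)=\mathrm{B}^{G}_{r-1}(v)$.
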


\begin{proof}
Suppose that we are given sets $D,K \subset \Lambda \backslash S$ satisfying \eqref{eq:sat}. We would like to define $\mathsf{Seeds}(D,K)$ to be the sequence of balls given by applying Lemma~\ref{lem:finding_many_seeds} to the graph \(G[\Lambda \backslash S]\), using the same $p$, $\delta$, $r$, $\ell$, but with $D$ in place of ``$A$'', and using
\[
    L := \partialout S\setminus K \quad \text{and} \quad R:= K,
\]
since~\eqref{eq:halfway_in_our_context} is immediately implied by~\eqref{eq: ritvik} with this setup. (There may be multiple sequences satisfying the conclusion of Lemma~\ref{lem:finding_many_seeds}; to define $\mathsf{Seeds}(D,K)$, we choose a particular sequence from among all options according to an arbitrary deterministic rule.)
To apply this lemma, we need to \textbf{a)} verify \eqref{eq:deltahypothesis} and \textbf{b)} show that every vertex cutset in $G[\Lambda \backslash S]$ from $L$ to $R$ has size at least~\eqref{eq:min_cutset_size}.

\noindent \textbf{a)} Let $v\in \Lambda\setminus S$. Consider $B\coloneqq \mathrm B_{r-1}^{G[\Lambda \setminus S]}(v)$ and $B'\coloneqq \mathrm B_{r-1}^{G}(v)$, and note that either $B = B'$ or $B'$ is not entirely contained in  $\Lambda \setminus S$. If $B = B'$, then by our choice of $r$ in~\eqref{eq: r},
\begin{align}
    \mathrm{P}_p(B\xleftrightarrow{\Lambda \setminus S} L \cup R) &\geq \mathrm{P}_p(B\xleftrightarrow{\Lambda \setminus S} 
    \partialout S \cup \partialin \Lambda) \\&\geq \mathrm{P}_p(B \leftrightarrow \infty) = \mathrm P_p(B' \leftrightarrow \infty) \geq 1-\delta.
\end{align}
If $B'$ is not entirely contained in $\Lambda \setminus S$, then $B$ must intersect $\partialout S \cup \partialin \Lambda \subset L\cup R$, and hence $\mathrm{P}_p(B\xleftrightarrow{\Lambda \setminus S} L \cup R) = 1.$

\noindent \textbf{b)} Observe that every ball of radius \(2r\) in \(G[\Lambda \setminus S]\) is contained in a ball of radius \(2r\) in $G$ and hence has size at most $|\mathrm B_{2r}| \leq b^2$.
Conversely, any vertex cutset $W$ in $G[\Lambda \setminus S]$ from $L$ to $R$ induces an edge cutset $\overline W\cup (\partial K\cap \partial S)$ in $G$ that from \(S\) to infinity and hence satisfies
\[
    d\cdot |W|+|\partial K\cap \partial S| \geq |\overline W|+|\partial K\cap \partial S| \geq \Phi(S).
\]
So it suffices to show that
\begin{equation}\label{eq:ba_vs_S}
d\cdot b^2(|D|+\ell)+|\partial K\cap \partial S| < \Phi(S).
\end{equation}
By~\eqref{eq:sat}, since $t\le c\, \Phi(S)$ and by our choice of $c$ in \eqref{eq: choice of c},
\begin{align} \label{eq:bound_on_A}
    d\cdot b^2|D|+|\partial K \cap \partial S | < db^3\ell t+t \le 2d b^3\ell t \leq  2db^3\ell \cdot c\, \Phi(S) = \frac{1}{2} \Phi(S).
\end{align}
Furthermore, since $t$ is a positive integer satisfying $t \leq c \, \Phi(S)$, we have that $\Phi(S)\ge 1/c$, and hence by our choice of $c$ in \eqref{eq: choice of c}
\begin{equation}\label{eq:bl_vs_phi}
d\cdot b^2\ell \leq \frac{1}{2} \cdot \frac{1}{c} \le \frac{1}{2}  \Phi(S).
\end{equation}
By combining~\eqref{eq:bound_on_A} with~\eqref{eq:bl_vs_phi}, we obtain \eqref{eq:ba_vs_S} as required.
\end{proof}
\begin{figure}[H]
\centering
\includegraphics[width=0.5\textwidth, 
trim={4cm 4cm 4cm 4cm}
]{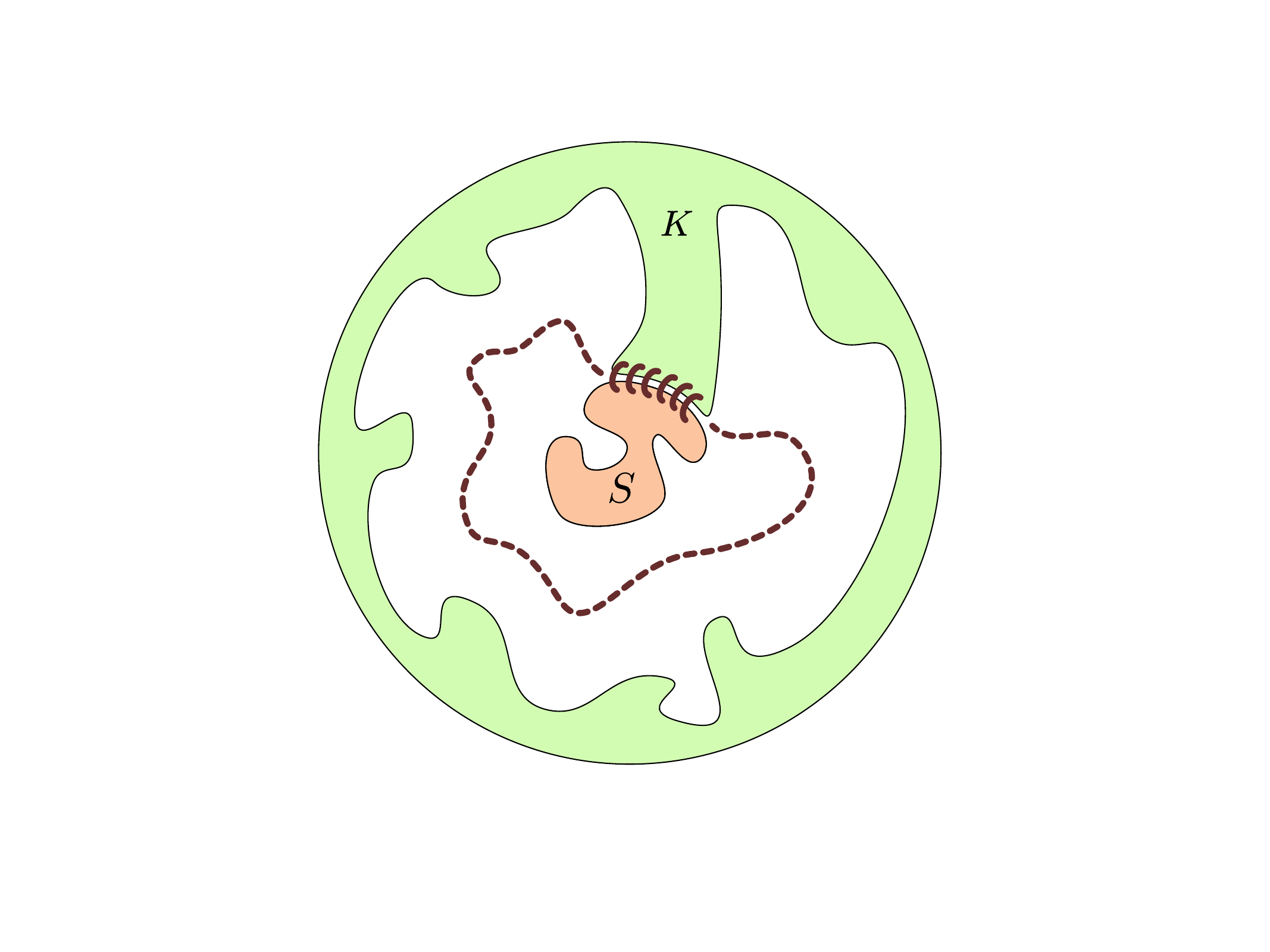}
\caption{
The dashed brown line is \textit{midway} between the peach set $S$ and the green set $K$ in the sense that any $r$-ball centred on this line has high probability to connect to both $\partialout S\setminus K$ and to $K$. This line cannot be too short because together with the brown edges making up $\partial S\cap \partial K$, we have a cutset from $S$ to $K$ and hence to infinity. Therefore, even if we are given a not-too-big region $D$ (not displayed), we can find many $r$-balls centred along the dashed line that are disjoint from each other and from $D$. We define $\mathsf {Seeds}(D,K)$ to be the sequence of such balls.}

\label{f: midway}
\end{figure}


Let us now estimate the probability that an exploration of the cluster of a given seed $B$ connects to both $K$ and $\partialout S\setminus K$. However, in the proof of Lemma \ref{lem: induction}, we will have already revealed the status of many edges in $\overline K$. Therefore, we will not allow connections to take place in the whole of $\omega \cup \zeta \cup \xi$, but rather in a \emph{fresh} subset of these configurations that will not yet have been revealed.

\begin{lemma} \label{lem:Bconnectstoblahwithgoodproba}
Let $D,K\subset \Lambda\setminus S$ be sets satisfying~\eqref{eq:sat}, and let $B\in \mathsf{Seeds}(D,K)$. Then
\begin{equation}\label{eq:caseb.2}
    \mathbb{P}(\partialout S\setminus K \xleftrightarrow[]{\omega\setminus \overline{K}} B \xleftrightarrow[]{(\omega\setminus\overline{K})\cup (\xi\cap \partial K)} K)\geq 1-\frac{\varepsilon}{2}.
\end{equation}
\end{lemma}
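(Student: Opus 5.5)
The plan is to split the event in \eqref{eq:caseb.2} into its two halves and bound the failure probability of each by $\eps/4$. A union bound then gives the claim. Throughout, $D$ and $K$ are deterministic sets satisfying \eqref{eq:sat}, and $B\in\mathsf{Seeds}(D,K)$ is a fixed ball. So the only randomness is in $\omega$ and $\xi$. Write $\omega' := \omega\setminus\overline K$. Since $\omega\sim\mathrm P_p^{\Lambda\setminus S}$, the configuration $\omega'$ is exactly $p$-percolation on the edge set $(\Lambda\setminus(K\cup S))^\circ$. It is independent of $\xi\cap\partial K$, which is $\eta$-percolation on the edges of $\partial K$ lying in $(\Lambda\setminus S)^\circ$.

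\textbf{First half.} The event $\{\partialout S\setminus K \xleftrightarrow{\omega'} B\}$ concerns only edges with both endpoints in $\Lambda\setminus(K\cup S)$. It therefore has the same probability as the event $\{B \xleftrightarrow{\Lambda\setminus (K\cup S)} \partialout S\setminus K\}$ under $\mathrm P_p$. By the first inequality in \eqref{eq:halfway_in_our_context} of Lemma~\ref{lem:seeds}, this probability is at least $1-\sqrt\delta$. By \eqref{eq: delta}, $\sqrt\delta\le\eps/4$.

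\textbf{Second half.} Here I would apply Lemma~\ref{lem: sprinkling} with parameters $p,\eta$ and $\eps/4$ in place of ``$\eps$'', which is exactly how $\alpha$ was chosen in \eqref{eq: delta}. Take the graph $H:=G[\Lambda\setminus S]$, the set $B$, and $R:=K$. In this setting $\omega'$ has the law $\mathrm P_p^{V(H)\setminus K}$, and $\xi\cap\partial K$ is $\mathrm P_\eta$ on $\partial_H K$, independent of $\omega'$. These are precisely the two configurations in that lemma. Its hypothesis is $\mathrm P_p(B\leftrightarrow K)\ge 1-\alpha$ in $H$. This follows from the second inequality in \eqref{eq:halfway_in_our_context}, namely $\mathrm P_p(B\xleftrightarrow{\Lambda\setminus S}K)\ge 1-\sqrt\delta$, together with $\sqrt\delta\le\alpha$. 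The lemma then yields $\mathbb P(B\xleftrightarrow{\omega'\cup(\xi\cap\partial K)}K)\ge 1-\eps/4$.

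Combining the two halves gives \eqref{eq:caseb.2}. There is no real obstacle; the only care needed is bookkeeping. One must check that removing $\overline K$ from $\omega$ leaves exactly the $p$-percolation on $(\Lambda\setminus(K\cup S))^\circ$ that appears in both \eqref{eq:halfway_in_our_context} and Lemma~\ref{lem: sprinkling}. One must also check that ``$\partial K$'' is read inside $H$, which is automatic because $\xi$ lives only on edges of $\Lambda\setminus S$. Finally, the case where $B$ meets $K$ is trivial, and Lemma~\ref{lem: sprinkling} already handles it.
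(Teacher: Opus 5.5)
Your proposal is correct and matches the paper's proof essentially step for step. The paper likewise gets the connection to $K$ from Lemma~\ref{lem: sprinkling} applied in $G[\Lambda\setminus S]$ with $R=K$ (using $\sqrt{\delta}\le\alpha$), gets the connection to $\partialout S\setminus K$ from the first bound in \eqref{eq:halfway_in_our_context} (using $\sqrt{\delta}\le\varepsilon/4$), and concludes by a union bound; your bookkeeping about the law of $\omega\setminus\overline{K}$ and its independence from $\xi\cap\partial K$ just makes explicit what the paper leaves implicit.
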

\begin{proof}
We first show that \begin{equation}\label{eq:BconnectsK}
    \mathbb{P}(B \xleftrightarrow[]{(\omega\setminus\overline{K})\cup (\xi\cap \partial K)} K)\geq 1-\frac{\varepsilon}{4}.
\end{equation}
By the second part of~\eqref{eq:halfway_in_our_context} and by the definition of $\delta$ in~\eqref{eq: delta},
\begin{equation} \label{eq:b_t_k_much_alpha}
\mathrm{P}_p(B\xleftrightarrow[]{\Lambda\setminus S} K)\geq 1-\sqrt\delta\ge 1-\alpha.
\end{equation}
Recall that $\alpha$ was chosen according to the sprinkling lemma (Lemma~\ref{lem: sprinkling}) with parameters $p$, $\eta$, and $\eps/4$ in place of `$\eps$'. By applying the conclusion of the sprinkling lemma to the graph $G[\Lambda\backslash S]$ with $B$ and $K$ in place of ``$B$'' and ``$R$'', we obtain \eqref{eq:BconnectsK}. Moreover, by the first part of~\eqref{eq:halfway_in_our_context} and by the definition of $\delta$ in~\eqref{eq: delta},
\begin{equation}\label{eq:BconnectsS}
    \mathbb{P}(B\xleftrightarrow{\omega \setminus \overline K}\partialout S\setminus K)\ge 1-\sqrt{\delta}\ge 1-\frac{\eps}{4}.
\end{equation}
The claimed inequality now follows from~\eqref{eq:BconnectsK} and~\eqref{eq:BconnectsS} by a union bound.
\end{proof}

\subsection{Growing seeds}\label{s: growing}
In this section, given a ball outputted by the $\mathsf{Seeds}$ function from the previous subsection, we will describe how to construct a certain cluster of the ball via a two-step exploration.

\begin{definition}
Given sets $D,K \subset \Lambda \setminus S$ satisfying \eqref{eq:sat}, and given $B\in \mathsf{Seeds}(D,K)$, define
$$\cC_{D,K}(B)\coloneqq \cC_1\cup \cC_2,$$
where
\begin{equation}
     \mathcal C_1:= \cC_{B}(\omega \setminus \overline K) \qquad \text{and} \qquad \mathcal C_2 := V\left(\partial\left(\cC_1\setminus K\right) \cap \partial K \cap \xi\right),
\end{equation}
i.e.\! $\cC_2$ is the set of all vertices contained in a $\xi$-open edge $uv$ with $u \in \cC_1\backslash K$ and $v \in K$.
\end{definition}
Here are three elementary properties of $\mathcal C_{D,K}(B)$ that are evident from its construction.

\begin{remark}\label{rem:Cconnected} Almost surely, $\mathcal{C}_{D,K}(B)$ is connected in $\omega \cup \xi \cup B^\circ.$ 
\end{remark}
\begin{remark}\label{lem:equivalent1}
Almost surely, $\mathcal{C}_{D,K}(B) \text{ intersects } \partialout S \setminus K$ if and only if
\[
    B\xleftrightarrow[]{\omega\backslash \overline{K}} \partialout S\setminus K,
\]
and $\mathcal{C}_{D,K}(B) \text{ intersects $K$}$ if and only if
\[
    B \xleftrightarrow[]{(\omega\setminus\overline{K})\cup (\xi\cap \partial K)} K.
\]
\end{remark}

 \begin{remark}\label{rem:Cmeas} For every fixed set $C\subset \Lambda \setminus S$, the event $\{\mathcal{C}_{D,K}(B)=C\}$ is measurable with respect to
 \[
    \omega \cap \overline C \qquad \text{and} \qquad \xi \cap \partial(C\setminus K)\cap \partial K.
 \]
 \end{remark}

\subsection{Proof of Lemma~\ref{lem: induction}}\label{s: creating}
Building upon the previous subsections, we can now construct our sequence of random sets.

\noindent\textbf{Construction of $\cC_0,\dots,\cC_t$}
\\Let us recursively define a sequence of sets $\cC_0,\dots,\cC_t \subset \Lambda \backslash S$ and a sequence $\mathcal{S}_0,\ldots, \mathcal{S}_t$ such that for all $j$, $\mathcal S_j$ is a collection of subsets of $\Lambda\setminus S$ satisfying
\begin{equation}\label{eq:sizeofseeds}
    \sum_{B\in \mathcal{S}_j}|B|\leq b\ell.
\end{equation}
Start with
$$\mathcal{S}_0=\varnothing \quad \text{and} \quad \mathcal{C}_0=\partialin \Lambda.$$
Now let $i \in \{1,\dots,t\}$, and suppose that we have defined $\mathcal S_j$ and $\mathcal C_j$ for all $j < i$, but we have not yet defined $\cS_i$ or $\cC_i$. Consider
\[
\mathcal D := \bigcup_{j=0}^{i-1} \bigcup_{B \in \mathcal S_j} B \quad \text{and} \quad \mathcal K := \bigcup_{j=0}^{i-1} \mathcal C_j.
\]
If $|\partial S\cap \partial \mathcal K| \geq t$, then set
\[
    \mathcal S_j := \varnothing \quad \text{and} \quad \mathcal C_j := \varnothing \quad \text{for all $j \geq i$.}
\]
Now instead suppose that $|\partial S\cap \partial \mathcal K|<t$. By applying~\eqref{eq:sizeofseeds} to all $j<i$, we have that
\[
    |\cD|\le b\ell i \leq b\ell t.
\]
Thus, $\cD$ and $\cK$ satisfy~\eqref{eq:sat}. So $\mathsf{Seeds}(\mathcal D,\mathcal K)$ is a sequence balls in $G[\Lambda \backslash S]$ with the properties guaranteed by Lemma~\ref{lem:seeds}, and the cluster $\mathcal C_{\cD , \cK}(B)$ from Section~\ref{s: growing} is well-defined for any $B \in \mathsf{Seeds}(\cD,\cK)$. Define
\[
    \mathcal S_{i} := \mathsf{Seeds}(\mathcal D,\mathcal K),
\]
and note that this choice satisfies~\eqref{eq:sizeofseeds} with $j=i$.
Let $\mathcal B$ be the first ball in the (ordered) sequence of balls given by $\mathsf{Seeds}(\cD,\cK)$ such that $\zeta(e) =1$ for all $e \in \mathcal B^{\circ}$; if no such ball exists, then set $\mathcal B := \varnothing$. If $\mathcal C_{\cD,\cK}(\mathcal B)$ intersects $\partialout S \setminus \cK$ and $\cK$, then set
\[
    C_{i} := \mathcal C_{\cD,\cK}(\mathcal B);
\]
otherwise, set
\[
    \mathcal C_{j} := \varnothing \quad \text{for all $j \geq i$}\qquad\text{ and }\qquad\mathcal S_{j} := \varnothing  \quad \text{for all $j \geq i+1$}.
\]

\noindent\textbf{Properties of $\cC_0,\dots,\cC_t$}
\\Note that for all $i$, if $\mathcal C_i = \varnothing$ then $\mathcal C_j = \varnothing$ for all $j > i$. Moreover, for all $i \in \{1,\dots,t\}$, if $\mathcal C_j \not= \varnothing $ for all $j \leq i$, then by Remark~\ref{rem:Cconnected} and our choice of $\mathcal C_0$, we have $\mathcal C_i \subset \mathcal C_{\partial \Lambda}$. Therefore,
\[
    \mathcal C_0,\dots,\mathcal C_t \subset \mathcal C_{\partial \Lambda}.
\]
Let $i \in \{1,\dots,t\}$. By construction of $\mathcal C_i$, we have that if $\mathcal C_i \not= \varnothing$ then $\mathcal C_i$ intersects $\partialout S \backslash \bigcup_{j<i} \mathcal C_j$. In particular,
\[
    N_i > N_{i-1} \quad \text{if and only if} \quad \cC_i \neq \varnothing,
\]
where $N_i$ is as given in~\eqref{eq:def_of_Ni}.
So in order to establish \eqref{eq:touches_++}, it suffices to show
\begin{equation}\label{eq: checkmate}
    \mathbb{P}( \cC_i \neq \varnothing \mid  \mathcal C_j \not= \varnothing \text{ for all } j \leq  i-1 \text{ and }N_{i-1} < t ) \geq 1-\eps,
\end{equation}
whenever the event being conditioned on has non-zero probability. To this end, fix non-empty sets of vertices $C_0,\dots,C_{i-1}$ such that $\mathbb{P}(\cC_j = C_j \text{ for all } j \leq i-1) >0$,
and such that $|\partial K \cap \partial S| < t$ where $K:= \bigcup_{j=0}^{i-1} C_j$. (If no such sets exist, then there is nothing to prove.) By averaging over all choices for these sets, to establish (\ref{eq: checkmate}), it suffices to show that
\begin{equation} \label{eq:eat_king}
    \mathbb{P}(\cC_i \not= \varnothing \mid \cC_j = C_j \text{ for all } j \leq i-1) \geq 1-\eps.
\end{equation}

Notice that there exists a unique sequence of deterministic collections $S_0,\dots,S_{i}$ such that if $\mathcal C_j = C_j$ for all $j \leq i-1$,  then $\cS_j = S_j$ for all $j \leq i$.
Now define
\[
D\coloneqq \bigcup_{j=0}^{i-1} \bigcup_{B \in S_j}B\quad \text{and}\quad K\coloneqq \bigcup_{j=0}^{i-1}C_{j}.
\]
Note that by repeatedly applying Remark~\ref{rem:Cmeas}, the event that $\cC_j = C_j$ for all $j \leq i-1$ is measurable with respect to
\begin{equation}\label{eq:triplemeas}
   \zeta \cap D^\circ, \quad \omega \cap \overline K, \quad \xi \cap K^\circ.
\end{equation}
Note that these three configurations are independent of
\begin{equation} \label{eq:zeta_on_new_balls}
\zeta \cap \bigcup_{B\in S_i} S_i^{\circ}
\end{equation}
because $\mathsf{Seeds}(D,K)$ only outputs balls that are disjoint from $D$. 
So, as in the construction of $\cC_i$, letting $\cB$ be the first $\zeta$-open ball in $\cS_i$ ($\cB=\varnothing$ if none exists), by our choice of $\ell$ in~\eqref{eq: eta},
\begin{equation}\label{eq:probagoodball}
    \mathbb P( \mathcal B = \varnothing \mid \cC_j = C_j \text{ for all } j \leq i-1) = \prod_{B \in S_i} (1 - \eta^{|B^{\circ}|}) \leq (1 - \eta^{b})^\ell  \leq \frac{\eps}{2}.
\end{equation}
Now consider a fixed $B \in \mathcal S_i$. By Remark~\ref{lem:equivalent1}
\begin{align}
&\mathbb{P}(\cC_i \not=\varnothing \mid  \cC_j = C_j \text{ for all } j \leq i-1\text{ and } \mathcal B = B)  
\\&= \mathbb{P}(    \partialout S\setminus K \xleftrightarrow[]{\omega\setminus \overline{K}} B \xleftrightarrow[]{(\omega\setminus\overline{K})\cup (\xi\cap \partial K)} K \mid  \cC_j = C_j \text{ for all } j \leq i-1 \text{ and }\mathcal B = B).
\end{align}
Note that the event that \[\partialout S\setminus K \xleftrightarrow[]{\omega\setminus \overline{K}} B \xleftrightarrow[]{(\omega\setminus\overline{K})\cup (\xi\cap \partial K)} K\] is measurable with respect to \[\omega\setminus \overline{K} \quad \text{and} \quad \xi \cap \partial K,\]
whereas the event that $\cC_j = C_j$ for all $j \leq i-1$ and $\mathcal B = B$ is measurable with respect to the configurations in~\eqref{eq:triplemeas} and \eqref{eq:zeta_on_new_balls}. So these two events are independent. Hence,
\begin{equation}\label{eq:rewritingconditional}
\mathbb{P}(\cC_i \not=\varnothing \mid  \cC_j = C_j \text{ for all } j \leq i-1 \text{ and } \mathcal B = B ) =\mathbb{P}(    \partialout S\setminus K \xleftrightarrow[]{\omega\setminus \overline{K}} B \xleftrightarrow[]{(\omega\setminus\overline{K})\cup (\xi\cap \partial K)} K ).
\end{equation}
So by Lemma~\ref{lem:Bconnectstoblahwithgoodproba} and averaging over all choices of $B$,
\begin{equation}
    \mathbb P( \mathcal C_i \not= \varnothing \mid  \cC_j = C_j \text{ for all } j \leq i-1 \text{ and }\mathcal B \not=\varnothing) \geq 1-\frac{\eps}{2}.\label{eq: bop bop}
\end{equation}
By combining equations~\eqref{eq:probagoodball}, \eqref{eq: bop bop}, and using that $(1-\eps/2)^2 \geq 1-\eps$, we establish~\eqref{eq:eat_king}, as desired. \qed

\section{Deducing Theorem \ref{thm: exp phi}}\label{s: deriving theorems}
In this section, we deduce Theorem \ref{thm: exp phi} from Theorem \ref{thm: main}. The first inequality appearing in Theorem \ref{thm: exp phi}, concerning the volume of $\cC_0$, is essentially an immediate corollary of Theorem \ref{thm: main}. On the other hand, to deduce the second inequality from Theorem \ref{thm: main}, concerning the radius of $\cC_0$, we will require the following geometric proposition.
 

\begin{proposition}\label{prop: geom}
Let $G$ be an infinite bounded-degree graph. Suppose that there exists $\eps > 0$ such that $\Phi(n)\ge \eps n^{1/2}$ for all $n\ge 1$. Then there exists $\delta>0$ such that for every finite, connected set of vertices $A$,
\begin{align}
    |\partial A|\ge \delta\cdot \operatorname{diam}(A).
\end{align}
\end{proposition}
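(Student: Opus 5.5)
We argue by slicing $A$ into distance layers and applying the isoperimetric hypothesis to each layer. Let $A$ be finite and connected with $D:=\operatorname{diam}(A)$, and write $k:=|\partial A|$. Since $A$ is connected it contains a geodesic-like path between two points at distance $D$; fix $a\in A$ with $\max_{v\in A} d(a,v)\ge D/2$. For $j\ge 0$ let $A_j:=A\cap \mathrm{B}_j(a)$. We will show that $k\ge \delta D$ for a suitable $\delta$ depending only on $\eps$ and the maximum degree $\Delta$.

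\textbf{Step 1: a boundary inequality for each layer.} For each $j\le D/2$, every edge of $\partial A_j$ either lies in $\partial A$ or joins a vertex at distance exactly $j$ from $a$ to one at distance $j+1$. Hence
\[
  |\partial A_j|\le k+\Delta\,|A\cap \partial \mathrm{B}_j^{\mathrm{v}}|,
\]
where $\partial\mathrm{B}_j^{\mathrm{v}}$ denotes the sphere of radius $j$. Writing $s_j:=|\{v\in A: d(a,v)=j\}|$ and $V_j:=|A_j|=\sum_{i\le j}s_i$, the hypothesis $\Phi(V_j)\ge\eps V_j^{1/2}$ gives
\[
  \eps V_j^{1/2}\le k+\Delta s_j \qquad\text{for all } 0\le j\le D/2 .
\]

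\textbf{Step 2: integrate the recursion.} Since $A$ is connected and reaches distance $D/2$ from $a$, we have $s_j\ge 1$ and $V_j\ge j+1$ for every $j\le D/2$. Suppose, for contradiction, that $k\le \delta D$ with $\delta$ small.

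On the range of $j$ where $\eps V_j^{1/2}\ge 2k$, Step 1 gives $s_j\ge \eps V_j^{1/2}/(2\Delta)$, that is,
\[
  V_j-V_{j-1}\ge c\,V_j^{1/2}, \qquad c:=\frac{\eps}{2\Delta}.
\]
This is the discrete analogue of $(V^{1/2})'\ge c/2$, so $V_j^{1/2}$ grows at least linearly in $j$: for $V_j\ge V_{j-1}$,
\[
  V_j^{1/2}-V_{j-1}^{1/2}=\frac{V_j-V_{j-1}}{V_j^{1/2}+V_{j-1}^{1/2}}\ge \frac{c}{2}.
\]
The threshold $\eps V_j^{1/2}\ge 2k$ is reached once $V_j\ge 4k^2/\eps^2$. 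Since $V_j\ge j+1$, this happens by some $j_0\le 4k^2/\eps^2$. From then on $V_j^{1/2}\ge c(j-j_0)/2$, so $V_{D/2}\ge c^2(D/2-j_0)^2/4$.

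\textbf{Step 3: the contradiction, which is the main obstacle.} If $j_0\le D/4$, this yields $V_{D/2}\gtrsim D^2$. Unfortunately, this is not contradictory by itself, since $|A|$ is unbounded; a large area is perfectly consistent with a small boundary for a general $A$. The bound must therefore come from somewhere else. The right move is to use the full strength of Step 1 rather than just the growth of $V_j$: on the range where the threshold holds, the layer sizes satisfy $s_j\gtrsim V_j^{1/2}\gtrsim j$, and each such layer sends at least one edge outward.

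I expect the cleaner route to be the following. Instead of applying the hypothesis to $A_j$, apply it to the sets $A\setminus \mathrm{B}_j(a)$, or equivalently to balls centred at both ends. Alternatively, choose $j$ to minimise $s_j$ over $[D/4,D/2]$. By averaging there is some $j$ in this window with $s_j\le 4|A|/D$, but that bound involves $|A|$ again.

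The fallback is an argument by contradiction using the linear lower bound. Let $k=|\partial A|$ be small. Then $A$ has at most $k$ boundary vertices, so $A$ is within bounded distance of a set whose every point has a full neighbourhood inside $A$. Combining the layer inequality $\eps V_j^{1/2}\le k+\Delta s_j$ with the fact that the sets $A\cap\mathrm{B}_j$ and $A\setminus\mathrm{B}_j$ are both large for $j\in[D/4,D/2]$, one gets $\min_j s_j\gtrsim \sqrt{\min(V_j,|A|-V_j)}-k$. Summing the positive parts should force $k\gtrsim D$.

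Making this final summation rigorous — in particular handling the layers where $s_j$ is small yet the boundary contribution stays at most $k$ — is the step I expect to be hardest. My first attempt will be the two-sided slicing (inner and outer sets), applying $\Phi$ to both.
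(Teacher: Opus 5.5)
Your proposal has a genuine gap, and it goes beyond the final summation you flag. Every relation you derive involves only the global quantity $k=|\partial A|$, and such relations can never give more than $k\gtrsim\sqrt{D}$. To see this, take $s_j=1$ in every layer, $|A|$ equal to the number of layers, and $k=\eps|A|^{1/2}$. These numbers satisfy $\eps V_j^{1/2}\le k+\Delta s_j$, the two-sided version $\eps(|A|-V_j)^{1/2}\le k+\Delta s_{j+1}$, $V_j\ge j+1$, and $\eps|A|^{1/2}\le k$. So your planned two-sided slicing, or any summation of these inequalities, must fail. What is lost is \emph{where} the boundary sits. A long thin stretch of $A$ carries boundary proportional to its length, but once that boundary is lumped into $k$, the layer inequalities say nothing about pieces with fewer than about $k^2/\eps^2$ vertices. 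Replacing $k$ by the part of $\partial A$ incident to the ball $A_j$ does not help either. Take a disc of radius $\rho$ around $a$ in $\mathbb{Z}^2$ with a path of length $L$ attached, where $\rho\ll L\ll\rho^2$. Along the path $V_j^{1/2}$ changes by only $O(L/\rho)\ll\rho$, so the ball inequalities stay consistent with that boundary count remaining $O(\rho)\ll D$. Separately, your remark that large area is compatible with small boundary is wrong here, since $|\partial A|\ge\Phi(|A|)\ge\eps|A|^{1/2}$. Your case $j_0\le D/4$ is therefore already finished, with $k\ge\eps V_{D/2}^{1/2}\ge\eps^2D/(16\Delta)$. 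The real obstruction is the case $j_0>D/4$, where you only get $k>\eps\sqrt{D}/4$.

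The missing idea is to localise both the set and its boundary around every level, and then glue the local estimates with a covering argument; this is what the paper does.
\begin{itemize}
\item Fix $o\in A$ at one end of a diametral pair and let $m=\operatorname{diam}(A)$. For each level $k\in\{0,\dots,m\}$ consider the slabs $A_{k,r}=\{v\in A:\operatorname{dist}(o,v)\in[k-r,k+r]\}$, with $f(k,r)=|A_{k,r}|$ and $g(k,r)=|\partial A_{k,r}\cap\partial A|$, the part of $\partial A$ seen by the slab.
\item Let $r(k)$ be the least $r$ with $g(k,r)\ge\tfrac{\eps}{2}f(k,r)^{1/2}$; it exists because $A_{k,m}=A$.
\item For $r<r(k)$, the isoperimetric inequality for $A_{k,r}$ forces the two levels $k\pm(r+1)$ to contain at least $\tfrac{\eps}{2d}f(k,r)^{1/2}$ vertices of $A$ in total, where $d$ is the maximum degree. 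This is your Step 2 recursion, but run on slabs and with the local $g$ in place of $k$.
\item By induction $f(k,r)\ge(\tfrac{\eps}{6d})^2(r+1)^2$ up to $r=r(k)$, hence $g(k,r(k))\ge\tfrac{\eps^2}{12d}(r(k)+1)$: each slab pays for its own width.
\item The intervals $[k-r(k)-\tfrac12,\,k+r(k)+\tfrac12]$ cover $[0,m]$, and a Vitali-type selection gives a disjoint subfamily of total length at least $m/2$. Disjoint intervals give disjoint slabs, hence disjoint subsets of $\partial A$, and summing yields $|\partial A|\ge\tfrac{\eps^2}{48d}\,m$.
\end{itemize}
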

This proposition is sharp in the sense that the hypothesis $\Phi(n) \geq \eps n^{1/2}$ cannot be weakened. Indeed, consider the graph induced by $\{(x,y)\colon |y|\le f(x)\}$ in $\mathbb{Z}^2$ for functions $f$ that are only \textit{slightly} sublinear. Before proving the proposition, let us conclude our proof of Theorem \ref{thm: exp phi}.

\begin{proof}[Proof Theorem \ref{thm: exp phi} given Proposition \ref{prop: geom}]
The first displayed inequality in Theorem \ref{thm: exp phi} follows  from Theorem \ref{thm: main} applied with $S=\{o\}$, because for all $n\ge 1$, if $n\le |\mathcal{C}_o|<\infty$ then $\Phi(\mathcal{C}_o)\ge \Phi(n)$.

As for the second displayed inequality in Theorem \ref{thm: exp phi}, let us start by explaining why we may assume without loss of generality that there exists a constant $\eps > 0$ such that $\Phi(n)\ge \eps n^{1/2}$ for all $n$: by \cite[Lemma 7.2]{LMS08} (see also \cite[Chapter 6.7]{Prob_on_trees_and_networks}) every infinite transitive graph that does not satisfy such a two-dimensional isoperimetric inequality cannot have at least two-dimensional volume growth, and hence by Gromov and Trofimov \cite{G81,T03} (see also \cite[Theorem 5.11]{W00}) is quasi-isometric to $\mathbb{Z}$, and therefore satisfies $p_c=1$. Now the second displayed inequality in Theorem~\ref{thm: exp phi} follows from Theorem~\ref{thm: main} because for all $n$, if $o \leftrightarrow \partial \mathrm{B}_n$ but $o \centernot\leftrightarrow \infty$, then $\operatorname{diam}(\cC_o) \geq n$, and hence by Proposition~\ref{prop: geom}, $\Phi(C_o) \geq \delta n$ for some constant $\delta >0$.
\end{proof}

All that remains is to prove the geometric proposition itself.

\begin{proof}[Proof of Proposition \ref{prop: geom}]
Let $A$ be a finite, connected set of vertices, and let $m$ be its diameter. Fix some vertex $o\in A$. Given integers $k$ and $r$, let $A_{k,r}$ be the set of all vertices $v \in A$ such that $\operatorname{dist}(o,v) \in [k-r,k+r]$, and let $A_k\coloneqq A_{k,0}$. Define
\[
    f(k,r) := |A_{k,r}| \quad \text{and} \quad g(k,r) := | \partial A_{k,r} \cap \partial A|.
\]

Given $k \in [m] := \{0,\dots,m\}$, let $r(k)$ be the smallest non-negative integer $r$ such that
\begin{equation} \label{eq:g_is_big_relative_to_f}
    g(k,r) \geq \frac{\eps}{2} f(k,r)^{1/2},
\end{equation}
which exists because by applying our isoperimetric inequality to $A$,
\[
    g(k,m) = |\partial A| \geq \eps |A|^{1/2} = \eps f(k,m)^{1/2}.
\]
Without loss of generality, assume that $\eps < 1$.
Let us prove by induction that for all $r \in [r(k)]$,
\begin{equation} \label{eq:quadratic_growth}
    f(k,r) \geq \left( \frac{\eps}{6d} \right)^2(r+1)^2,
\end{equation}
where $d$ is the maximum vertex degree of $G$. When $r = 0$, (\ref{eq:quadratic_growth}) holds trivially because $A$ is connected and hence $f(k,0) \geq 1$. Now let $r \in \{1,\dots,m\}$, and suppose that (\ref{eq:quadratic_growth}) holds at $r-1$. By applying our isoperimetric inequality to $A_{k,r-1}$,
\[
    d |A_{k-r}| + d |A_{k+r}| + g(k,r-1) \geq |\partial A_{k,r-1}| \geq \eps f(k,r-1)^{1/2}.
\]
So by the negation of (\ref{eq:g_is_big_relative_to_f}) and by (\ref{eq:quadratic_growth}) at $r-1$,
\[
    |A_{k-r}| + |A_{k+r}| \geq \frac{\eps}{2d} f(k,r-1)^{1/2} \geq \frac{\eps}{2d} \cdot \frac{\eps}{6d}r \geq \left( \frac{\eps}{6d} \right)^2(2r+1),
\]
and hence,
\begin{align}
    f(k,r) &= f(k,r-1) + |A_{k-r}|+|A_{k+r}|\\
    &\geq \left( \frac{\eps}{6d} \right)^2 r^2 + \left( \frac{\eps}{6d} \right)^2(2r+1) = \left( \frac{\eps}{6d} \right)^2(r+1)^2,
\end{align}
completing our induction. Now by (\ref{eq:g_is_big_relative_to_f}) and (\ref{eq:quadratic_growth}) at $r=r(k)$,
\begin{equation} \label{eq:g_is_linear}
    g(k,r(k)) \geq \frac{\eps^2}{12d} (r(k) + 1).
\end{equation}

Note that the collection of intervals
\[
    I_k := \left[k-r(k)-1/2, k + r(k) + 1/2\right] \quad \text{for $k \in [m]$}
\]
trivially covers $[0,m]$. Consider a minimal family of such intervals covering $[0,m]$. Enumerate the intervals in this family, from left to right, according to each interval's left endpoint, and observe that by minimality, the even-indexed intervals are disjoint and the odd-indexed intervals are disjoint. Hence there exists a subset $L \subset [m]$ such that the intervals $I_k$ for $k \in L$ are disjoint from each other and satisfy $\sum_{k \in L} |I_k| \ge m/2$. (We could have alternatively used Vitali's covering lemma.) So by (\ref{eq:g_is_linear}) and the fact that each $I_k$ has length $|I_k| \leq 2(r(k) + 1)$, 
\begin{align}
    |\partial A| \geq \sum_{k \in L} g(k,r(k)) \geq \sum_{k \in L} \frac{\eps^2}{12d}(r(k)+1) \geq \frac{\eps^2}{24d} \sum_{k \in L} |I_k| \geq \frac{\eps^2}{48d}m. &\qedhere
\end{align}
\end{proof}

\section{Collecting mass}\label{s: collecting mass}

Let $G$ be an infinite connected transitive graph. By Theorem \ref{thm: main}, for all $p > p_c$, there exists a constant $c>0$ such that for every finite set of vertices $S$,
\[
    \mathrm P_p(S \leftrightarrow \infty) \geq 1-e^{-c\,\Phi(S)},
\]
which is sharp up to the choice of $c$. The following theorem says that with high probability, $S$ is also connected to many vertices within its neighbourhood at every scale. Formally, given non-empty finite set of vertices $S$ and $n \in \mathbb N$, define
\begin{equation}
    \mathcal{C}_S^{n}=\big\{v\in \mathrm{B}_n(S): v\xleftrightarrow[]{\mathrm{B}_n(S)} S\big\}.
\end{equation}

\begin{theorem}\label{prop:collectingmass}
For all $p > p_c$ there exist $c>0$ such that for every finite non-empty set of vertices $S$ we have,
\begin{equation}\label{eq:collectingoal}
    \mathrm P_p\big(\forall n \in \mathbb N \quad |\mathcal{C}_S^n| \geq v_n\big) \geq 1-\sum_{n\ge |S|}e^{-c\Phi(n)},
\end{equation}
where  $v_n=v_n(S)$ for $n\ge 0$ is defined implicitly by  
\begin{equation}\label{eq:1}
    \int_{|S|}^{v_n}\frac 1{c\, \Phi(t)}\,dt= n.
\end{equation}
 \end{theorem}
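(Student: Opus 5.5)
We prove a one-step growth estimate and then iterate it along the scales $n$. The goal is that at each scale, with very high probability, $\mathcal{C}_S^{n+1}$ is larger than $\mathcal{C}_S^{n}$ by roughly $c\,\Phi(|\mathcal{C}_S^n|)$. Summing these increments then reproduces the ODE $dv/dn = c\,\Phi(v)$ that defines $v_n$ in \eqref{eq:1}.

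\textbf{Step 1: the one-step estimate.} Fix $n$ and condition on $C:=\mathcal{C}_S^n$. This set is measurable with respect to the edges of $\overline{C}$ that lie in $\mathrm{B}_n(S)^\circ$. On the event that $S$ is connected to $\infty$, the infinite cluster of $S$ must leave $C$ through an open edge of $\partial C$ whose outer endpoint lies outside $\mathrm{B}_n(S)$, hence in $\mathrm{B}_{n+1}(S)$. This crude observation gives only one new vertex, so it is too weak. Instead I would apply Theorem~\ref{thm: main} to the finite set $C$: with probability at least $1-e^{-c\Phi(C)}$ the set $C$ connects to $\infty$. Combined with the touch-counting idea of Proposition~\ref{prop: main}, I expect many edges of $\partial C$, of order $\Phi(C)\ge \Phi(|C|)$, to be open and to lead out of $C$.

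Concretely, I would replace Step 1 by the following pair of claims. With probability at least $1-e^{-c\Phi(|C|)}$, the number of open edges of $\partial C$ leading to vertices outside $C$ is at least $c\,\Phi(|C|)$. The outer endpoints of these edges lie in $\mathrm{B}_{n+1}(S)$ and are distinct up to a factor of $d$. So $|\mathcal{C}_S^{n+1}|\ge |C|+\frac{c}{d}\Phi(|C|)$. For the lower bound on the number of open boundary edges, note that the edges of $\partial C$ going outside $\mathrm{B}_n(S)$ are unrevealed. Their number $X$ satisfies $\mathrm{P}(X\text{ all closed})\le$ the probability that $S\not\leftrightarrow\infty$ restricted to exits through them. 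A cleaner route is to apply Theorem~\ref{thm: main} with its sprinkled form: write $p$-percolation as $p'$ plus an $\eta$-sprinkle. The $p'$ configuration gives, by Theorem~\ref{thm: main} and Proposition~\ref{prop: main}, at least $c\Phi(C)$ edges of $\partial C$ connected to $\infty$ outside $C$. Each of these is then open in the sprinkle independently, so a Chernoff bound yields $\gtrsim \eta c\,\Phi(C)$ open ones, failing with probability $e^{-c'\Phi(C)}$.

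\textbf{Step 2: iteration and the union bound.} Set $u_n:=|\mathcal{C}_S^n|$, with $u_0\ge|S|$. On the intersection of the good events, $u_{n+1}\ge u_n+c\,\Phi(u_n)$ for every $n$. Since $\Phi$ is nondecreasing (which follows from its definition), comparison with the integral \eqref{eq:1} gives $u_n\ge v_n$ after shrinking $c$. The $n$-th failure probability is at most $e^{-c\Phi(u_n)}$. Because the $u_n$ are strictly increasing integers with $u_n\ge|S|$, the failure probabilities sum to at most $\sum_{m\ge|S|}e^{-c\Phi(m)}$, which is the bound in \eqref{eq:collectingoal}. Formally, I would take the union over the first $n$ at which growth fails, conditioning on $\mathcal{C}_S^n$ at that step.

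\textbf{Main obstacle.} The hard step is the conditional one-step estimate. At scale $n$ the boundary edges of $C$ have been partly revealed closed, namely those inside $\mathrm{B}_n(S)$, so Theorem~\ref{thm: main} cannot be applied verbatim. I would handle this as in Section~\ref{s: many touches}: apply Proposition~\ref{prop: main} to $C$ in the fresh region, where the event $\Psi(C)\ge t$ depends only on edges outside $C$. The open edges of $\partial C$ it produces must have outer endpoints outside $\mathrm{B}_n(S)$, since otherwise they would already belong to $C$. Sprinkling on these unrevealed edges then gives the required growth, at the cost of lowering $p$ to some $p'\in(p_c,p)$ in the first layer.
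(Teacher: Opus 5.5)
Your architecture matches the paper's: a one-step growth estimate at each scale, then a union bound over the first scale at which growth fails. The gap is in the one-step estimate, which you rightly flag as the crux. Your route does not prove it, and in the conditional form you state it is false.

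First, Proposition~\ref{prop: main} only gives the exponentially small lower bound $\mathrm{P}_q(\Psi(C)\ge t)\ge(1-\eps)^t$. It does not say that $\Psi(C)\ge c\,\Phi(C)$ with probability $1-e^{-c\Phi(C)}$.

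Second, $\Psi(C)$ counts edges of $\partial C$ whose outer endpoint reaches $\infty$ off $C$, irrespective of the state of the edge itself. Once you condition on $\mathcal{C}_S^n=C$, every edge of $\partial C$ with outer endpoint in $\mathrm{B}_n(S)$ is already revealed closed, so it cannot be sprinkled. Your sentence ``the open edges it produces must have outer endpoints outside $\mathrm{B}_n(S)$'' is therefore a non sequitur. The only edges that can create growth are the $m$ edges from $C$ to $\partialout\mathrm{B}_n(S)$. These are unrevealed and independently open with probability $p$, so no sprinkling is needed, but $m$ is controlled neither by $\Psi(C)$ nor by $\Phi(|C|)$.

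Third, if $C$ does not reach distance $n$ from $S$, then $m=0$ and there is no growth with conditional probability one. More generally, if $m$ is bounded, the conditional probability of no growth is at least $(1-p)^m$. So no bound of the form $\mathrm{P}(\text{failure at }n\mid\mathcal{C}_S^n)\le e^{-c\Phi(|\mathcal{C}_S^n|)}$ holds pointwise. Step~2, which sums such bounds along the random sequence $u_n$, rests on an estimate that does not exist.

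The missing idea is the observation you discarded as too weak, used quantitatively and in reverse. If only few edges lead out of the explored cluster, closing them costs only $(1-p)^{\text{few}}$ by independence. Doing so produces a large \emph{finite} cluster of $S$, which Theorem~\ref{thm: main} makes exponentially unlikely. This gives an averaged bound rather than a conditional one.

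The paper applies this at scale $n+1$. Each edge of $\partial\mathrm{B}_{n+1}(S)$ connected to $S$ inside $\mathrm{B}_{n+1}(S)$ has its inner endpoint at distance exactly $n+1$, hence in $\mathcal{C}_S^{n+1}\setminus\mathcal{C}_S^n$. So there are at most $d(u_{n+1}-u_n)$ such edges, and they are unrevealed given $\mathcal{C}_S^{n+1}$. Combine this with $v_{n+1}-v_n\le 2c\,\Phi(v_n)$, which follows from \eqref{eq:1}, $\Phi(x)\le dx$ and $\Phi(2x)\le 2\Phi(x)$. With $c'$ the constant from Theorem~\ref{thm: main}, this yields
\[
\mathrm{P}_p\big(u_n\ge v_n,\ u_{n+1}<v_{n+1}\big)\,(1-p)^{2cd\,\Phi(v_n)}\le \mathrm{P}_p\big(v_n\le|\mathcal{C}_S|<\infty\big)\le e^{-c'\Phi(v_n)}.
\]
The union bound is then taken over these events, comparing with the deterministic sequence $v_n$ rather than a random recursion. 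Only those $n$ for which $[v_n,v_{n+1})$ contains an integer contribute, which gives the sum $\sum_{n\ge|S|}e^{-c\Phi(n)}$.

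Alternatively, you could stay at scale $n$ and split on the size of $m$. If $m$ is large, a binomial tail bound gives many open exits. If $m$ is small, the closing argument applies. Either way, replace Proposition~\ref{prop: main}, the sprinkling and the conditional formulation by this finite-energy comparison with Theorem~\ref{thm: main}.
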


\begin{remark}
Theorem~\ref{prop:collectingmass} may be viewed as an analogue for percolation of the well-known relationship between return probabilities and isoperimetry for random walks (see \cite[Chapter 6.7]{Prob_on_trees_and_networks}). 
\end{remark}
\begin{remark}
    Essentially, $v_n(S)$ is the lower bound on the size of the neighbourhood $\mathrm{B}_n(S)$ that one obtains simply by recursively applying the isoperimetric function to smaller neighbourhoods. If $G$ is of polynomial growth then $v_n(S)\asymp \mathrm{B}_n(S)$. On the other hand, if $G$ is of exponential growth, then by a result of Coulhon and Saloff-Coste, at worst $\Phi(x)\asymp x/\log x$, in which case $v_n\asymp e^{c\sqrt{n}}$. If $G$ is non-amenable (i.e. $\Phi(x)\gtrsim x$), then we always have $v_n\gtrsim e^{cn}$. 
\end{remark}

\begin{proof}
Let $p \in (p_c,1)$ and let $c'>0$ be the constant guaranteed by Theorem~\ref{thm: exp phi} (it suffices to consider only $p<1$). We will show that the conclusion of our theorem holds with $c :=\min\big(\frac{-c'}{4d\log(1-p)},\frac{1}{2d}\big)$, where $d$ is the vertex degree of $G$. Let $S$ be a finite set of vertices, and let $v_n$ be defined by~\eqref{eq:1}. Consider some non-negative integer $n$.
Since $\Phi$ is increasing, 
\begin{equation}
    v_{n+1} - v_n \le \int_{v_n}^{v_{n+1}} \frac{\Phi(v_{n+1})}{\Phi(t)} \, dt = c\, \Phi(v_{n+1}).
\end{equation}
By rearranging this inequality and using that $c \le 1/2d$ and $\Phi(v_{n+1}) \le d v_{n+1}$, we obtain $v_{n+1} \le 2v_n$. Since $
\Phi(2x) 
\leq 2 \Phi(x)$ for all $x$, it follows that $\Phi(v_{n+1}) \le \Phi(2v_n) \le 2 \Phi(v_n)$. Therefore, 
\begin{equation}\label{eq:vnestimate}
v_{n+1} - v_n \le 2 c\, \Phi(v_n).
\end{equation}

For each $n\ge 0$, let $M_n=|\mathcal C_S^n|$. Observe that $M_0=|S|=v_0$. By a union bound,
\begin{equation}\label{eq: unionbound}
    \mathrm{P}_p(\forall n \;\; M_n\ge v_n) 
    \geq 1 - \sum_{n\geq 0} \mathrm{P}_p(M_n \geq v_n \text{ but } M_{n+1} < v_{n+1}).
\end{equation}
Since $M_n$ is always an integer, the summands here are zero whenever $[v_n,v_{n+1})$ does not contain an integer. So let $I$ be the set of all $n\in \mathbb{N}$ such that the interval $[v_n, v_{n+1})$ contains an integer, and consider a fixed $n\in I$. Let $\mathcal W$ be the set of all edges $e \in \partial \mathrm{B}_{n+1}(S)$ such that $e \xleftrightarrow{\mathrm{B}_{n+1}(S)} S$. By~\eqref{eq:vnestimate}, when $M_n \geq v_n$ and $M_{n+1} < v_{n+1}$, we know that
\[
    M_{n+1} - M_n \leq 2c\, \Phi(v_{n}),
\]
and hence $|\mathcal{W}| \le 2cd\Phi(v_n)$. By revealing our percolation configuration in $\mathrm{B}_{n+1}(S)$, we can determine the set $\mathcal W$ without revealing the status of its edges. Moreover, if every edge in $W$ is closed then $S$ is not connected to infinity.  So by independence
\begin{align}
    \mathrm{P}_p\big(M_n \ge v_n \text{ but } M_{n+1} < v_{n+1}\big) \cdot (1-p)^{2cd\Phi(v_n)} \nonumber \le \mathrm{P}_p(v_n \le |\mathcal{C}_S| < \infty).
\end{align}
So by rearranging, since $c'$ has the properties guaranteed by Theorem \ref{thm: exp phi},
\begin{equation}\label{eq:rearrange}
    \mathrm{P}_p \big( M_n \ge v_n \text{ but } M_{n+1}< v_{n+1} \big) 
    \le \frac{e^{-c'\Phi(v_n)}}{(1-p)^{2cd \Phi(v_n)}} 
    \le e^{-c\Phi(v_n)},
\end{equation}
where the last inequality follows from our formula for $c$.
Finally, plugging~\eqref{eq:rearrange} in~\eqref{eq: unionbound} yields
\begin{align}
    \mathrm{P}_p(\forall n\;\; M_n\ge v_n) 
    &\ge 1 - \sum_{n\in I} e^{-c\,\Phi(v_n)} \ge 1 - \sum_{n \ge |S|} e^{-c\,\Phi(n)}.&\qedhere
\end{align}
\end{proof}

\begin{remark}
The argument in this section can be seen as a version of the `isoperimetry from tail estimates' argument in~\cite{Gabor}, restricted to balls.
\end{remark}

\section{Supercritical sharpness on $\mathbb Z^d$}
\label{sec:supercr-sharpn-mathb}

In this section, we will explain how to derive the following Grimmett--Marstrand slab theorem from Theorem~\ref{prop:collectingmass}. Fix $d \ge 3$ and consider percolation on the hypercubic lattice $\mathbb{Z}^d$. For each $\ell\ge 1$, consider a \emph{slab of thickness $\ell$}, given by
\begin{equation}
\mathbb S_\ell:=\mathbb Z^2\times\{-\ell,\ldots,\ell\}^{d-2}
\end{equation}

\begin{theorem}[Grimmett--Marstrand, 1990]\label{thm:1}
    For all $p > p_c(\mathbb{Z}^d)$, there exists $\ell \ge 1$ such that
    \begin{equation}
        \mathrm{P}_p(0 \xleftrightarrow{\mathbb{S}_\ell} \infty) > 0.
    \end{equation}
\end{theorem}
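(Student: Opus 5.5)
We derive the slab theorem from Theorem~\ref{prop:collectingmass} by a renormalisation in the two free coordinates of $\mathbb S_\ell$: blocks of $\mathbb Z^2$ become good or bad, and good blocks must link to good neighbours with high probability using only edges of a thick slab.

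\textbf{Step 1: local growth from collecting mass.} Fix $p>p_c(\mathbb Z^d)$ and choose $p'\in(p_c,p)$ with $(1-p')(1-\eta)=1-p$. On $\mathbb Z^d$ we have $\Phi(t)\asymp t^{(d-1)/d}$, so the integral in \eqref{eq:1} gives $v_n(S)\gtrsim \min\big((|S|^{1/d}+c n)^d,\,\cdot\big)$. In particular, for a box $S$ of side $m$, with probability at least $1-\sum_{k\ge m^d}e^{-c k^{(d-1)/d}}$ (which tends to $1$ as $m\to\infty$), the set $\mathcal C^n_S$ has size at least a constant fraction of $|\mathrm B_n(S)|$ for every $n\ge m$. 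The key point is that this connectivity is \emph{local}: it uses only edges inside $\mathrm B_n(S)$.

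\textbf{Step 2: a one-step seed extension.} Let $\Lambda_N=\{-N,\dots,N\}^d$. Call $\Lambda_m(x)$ a \emph{seed} if all its edges are open. Take a seed at the origin and $n=N\gg m$. By Step 1, with high probability the seed is connected inside $\Lambda_{N+m}$ to a set of order $N^d$ vertices. In particular, by pigeonhole, $\Theta(N^{d-1})$ of these vertices lie in a thin layer near a chosen face of $\Lambda_N$. Most of them are adjacent to many translates of $m$-boxes. A sprinkling argument then gives, with high probability, an open path in $\Lambda_{2N}$ (using the $\eta$-sprinkle) to a new seed $\Lambda_m(y)$ with $y$ located near the centre of a neighbouring block in a prescribed planar direction. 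The new seed can be found via a seed layer, exactly as in Section~\ref{s: proof of prop}, or via the classical Grimmett--Marstrand trick.

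The two ingredients that make this work are as follows. Since the mass is spread over order $N^d$ vertices, each of which has probability at least $\eta^{|\Lambda_m^\circ|}$ of starting an open seed, some seed appears with high probability. Moreover, Lemma~\ref{lem: sprinkling}-type thinning allows us to ignore the negative information revealed along the way. Everything happens inside a slab of thickness $\ell=2N+m$ around the seed's height, with a height correction used to recentre.

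\textbf{Step 3: coupling with 2D site percolation.} Renormalise the plane into blocks of side $N$ and run a dynamic exploration: each step extends from the current seed to a seed in an adjacent block, recentring the height so it stays within $\ell$. Each extension succeeds with conditional probability at least $1-\epsilon(N)\to 1$, uniformly given the past. Hence the set of reached blocks stochastically dominates a highly supercritical dependent (Liggett--Schonmann--Stacey) site percolation on $\mathbb Z^2$, which percolates. This yields an infinite open path in $\mathbb S_\ell$.

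\textbf{Main obstacle.} The main difficulty is Step 2's uniformity under conditioning: previously explored regions carry negative information, so the extension probability must be controlled given the history. I would handle this by exploring only fresh edges and using the $\eta$-sprinkle on the boundary of explored regions, as in Lemma~\ref{lem: sprinkling}. I would also keep the recentring so that the seed's height never drifts out of the slab.
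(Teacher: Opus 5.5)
There is a genuine gap in Step 2. Your Step 2 asserts that, uniformly in the history, a seed can be joined inside $\Lambda_{2N}$ (with sprinkling) to a new seed near the centre of a \emph{prescribed} neighbouring block at a \emph{corrected} height. That assertion is the entire content of the Grimmett--Marstrand dynamic renormalisation, and nothing you extract from Theorem~\ref{prop:collectingmass} provides it.

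Theorem~\ref{prop:collectingmass} is unconditional and says only that, under $\mathrm{P}_p$, the cluster of $S$ inside $\mathrm{B}_n(S)$ has at least $v_n$ vertices. It says nothing about where that mass lies.
\begin{itemize}
\item Your pigeonhole is wrong as stated. A set of density $\delta$ in $\Lambda_N$ can avoid a neighbourhood of any given face; getting a \emph{chosen} face needs lattice symmetry and the $\sqrt{\mathsf{trick}}$.
\item ``Some seed appears among order $N^d$ cluster vertices'' gives no control over the seed's position. The next step and the height recentring both need that control.
\item More seriously, once the first cluster is explored its $\omega$-boundary is known to be closed. The fresh cluster of the new seed must then grow in the complement of an explored set of positive density, and Theorem~\ref{prop:collectingmass} says nothing under this conditioning.
\item Lemma~\ref{lem: sprinkling} does not repair this. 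It treats an explored set as a \emph{target}, reached through fresh edges plus a sprinkle on its boundary. In your Step 2 the explored set is an \emph{obstacle} between the new seed and the next block.
\end{itemize}
Your ``Main obstacle'' paragraph names this difficulty without resolving it. Invoking ``the classical Grimmett--Marstrand trick'' is circular: that argument already proves the slab theorem from $\theta>0$ and sprinkling alone, so Theorem~\ref{prop:collectingmass} would be doing no work.

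The paper never explores dynamically, so no conditioning arises. It uses Theorem~\ref{prop:collectingmass} only to show that, with probability close to $1$, each cluster $\mathcal C_i$ of $\mathrm{B}_k(ni)$ inside $\mathrm{B}_{Cn}(ni)$, for $i=0,\dots,C-1$, has more than $\delta|\mathrm{B}_{2Cn}|$ vertices, all lying in $\mathrm{B}_{2Cn}$. With $C=2\lceil 1/\delta\rceil$, two of these sets must intersect. The $\sqrt{\mathsf{trick}}$ over the at most $C^2$ pairs, together with translation invariance, then gives $\mathrm{P}_p(\mathrm{B}_k \xleftrightarrow{\mathrm{B}_{2Cm}} \mathrm{B}_k(m))\geq 1-\eps$ for some $m\geq n$.

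This box-to-box estimate feeds a \emph{static} renormalisation (Lemma~\ref{lem:uniqimpliesperco}). The block variables there are finitely dependent, so Liggett--Schonmann--Stacey applies directly. Consecutive connections are glued not by seeds and sprinkling but by a local uniqueness event $U$, which is likely by uniqueness of the infinite cluster.

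These are the two ideas your proposal is missing: density plus pigeonhole to get a box-to-box connection estimate, and uniqueness as the gluing mechanism.
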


We will apply the following well-known lemma, which follows from a standard static renormalisation scheme. We provide its proof for the reader's convenience. We write $\mathrm{B}_k(n)$ as shorthand for $\mathrm{B}_k(e_n)$ where $e_n := (n,0,\dots,0) \in \mathbb Z^d$ and we set $\mathrm{B}_k := \mathrm{B}_k(0)$.

\begin{lemma}\label{lem:uniqimpliesperco}
  Let $p \in [0, 1]$. If there exists a constant $C\ge 2$ such that 
  \begin{equation}
    \label{eq:uniqimpliesperco}
    \lim_{k \to \infty} \limsup_{n \to \infty} \mathrm{P}_p(\mathrm{B}_k \xleftrightarrow{\mathrm{B}_{Cn}} \mathrm{B}_k(n)) = 1,
  \end{equation}
  then there exists $\ell \ge 1$ such that $\mathrm{P}_p(0 \xleftrightarrow{\mathbb{S}_\ell} \infty) > 0$.
\end{lemma}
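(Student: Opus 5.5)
We prove Lemma~\ref{lem:uniqimpliesperco} by a static renormalisation: good blocks of a two-dimensional coarse lattice inside a thick slab dominate supercritical site percolation on $\mathbb{Z}^2$. Fix $C\ge 2$ as in \eqref{eq:uniqimpliesperco}, and let $\eta>0$ be a small constant to be chosen later, below the threshold for two-dimensional $1$-dependent percolation. By \eqref{eq:uniqimpliesperco}, choose $k$ and then arbitrarily large $n$ such that
\[
\mathrm{P}_p\big(\mathrm{B}_k\xleftrightarrow{\mathrm{B}_{Cn}}\mathrm{B}_k(n)\big)\ge 1-\eta .
\]
By lattice symmetry, the same bound holds for every pair of balls $\mathrm{B}_k(x)$, $\mathrm{B}_k(x+ne)$ with $e$ any of $\pm e_1,\pm e_2$. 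We then take the slab thickness $\ell:=Cn+k$, so that every $\mathrm{B}_{Cn}(x)$ with $x\in n\mathbb{Z}^2\times\{0\}^{d-2}$ lies inside $\mathbb{S}_\ell$.

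\textbf{Coarse lattice.} For $z\in\mathbb{Z}^2$ write $x_z:=(nz,0,\dots,0)$. Call a coarse edge $\{z,z'\}$ \emph{good} if $\mathrm{B}_k(x_z)$ is fully open, $\mathrm{B}_k(x_{z'})$ is fully open, and $\mathrm{B}_k(x_z)\xleftrightarrow{\mathrm{B}_{Cn}(x_z)}\mathrm{B}_k(x_{z'})$. Whether an edge is good depends only on the configuration in $\mathrm{B}_{Cn+k}(x_z)$. Hence good-ness of edges is $M$-dependent on $\mathbb{Z}^2$ with $M$ of order $C$, uniformly in $n$.

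The difficulty is that a fully open $k$-ball has probability only $p^{|\mathrm{B}_k^\circ|}$, which is not close to $1$. We avoid requiring it: instead of asking the balls to be fully open, we require all $\mathrm{B}_k$-to-$\mathrm{B}_k$ crossings among the boundedly many neighbours of a site to occur, and that these crossings meet. For the gluing, we use the statement with $\mathrm{B}_k$ replaced by $\mathrm{B}_{k'}$ for a larger $k'$. Concretely, for fixed $k$, the event that two distinct clusters, each crossing from $\mathrm{B}_k(x)$ to distance $n$, both exist inside $\mathrm{B}_{Cn}$ should have small probability; we deduce this from \eqref{eq:uniqimpliesperco}. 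Applying the hypothesis at $k$ and using the triangle-type chain $\mathrm{B}_k(0)\to\mathrm{B}_k(n e_1)\to\mathrm{B}_k(0)$, the FKG inequality together with a union bound over the $O(1)$ neighbouring directions gives that, with probability $1-O(\eta)$, a single cluster inside $\mathrm{B}_{2Cn}(x_z)$ touches $\mathrm{B}_k(x_{z'})$ for every neighbour $z'$. We can then call a site $z$ good if this holds. Two adjacent good sites share the ball $\mathrm{B}_k(x_{z'})$ but possibly via different clusters. This is exactly the main obstacle. To handle it, I would enlarge the requirement so that the defining cluster of $z$ reaches $\mathrm{B}_k(x_{z'})$ and also $\mathrm{B}_k$ of each neighbour of $z'$. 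Then both clusters of $z$ and $z'$ connect $\mathrm{B}_k(x_{z'})$ to a common far ball, and the hypothesis applied once more with failure probability $\eta$ forces them to merge within $\mathrm{B}_{3Cn}$.

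\textbf{Conclusion.} Once good sites form a finite-range dependent field with density $1-O(\eta)$, the Liggett--Schonmann--Stacey theorem (or a direct Peierls count over $*$-circuits of bad sites, using that sites at distance greater than $M$ are independent) gives an infinite path of good sites with positive probability. Along such a path the defining clusters are glued into a single infinite open cluster inside $\mathbb{S}_{\ell}$, where $\ell:=3Cn+k$. This cluster contains some point of $\mathrm{B}_k$ with positive probability. By the finite-energy property (opening all edges of $\mathrm{B}_k$), we get $\mathrm{P}_p(0\xleftrightarrow{\mathbb{S}_\ell}\infty)>0$.
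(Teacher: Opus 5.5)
There is a genuine gap, and it sits exactly at the step you flag as the main obstacle. The hypothesis \eqref{eq:uniqimpliesperco} only controls the probabilities of increasing ball-to-ball connection events. FKG and union bounds let you make all the crossings $\mathrm{B}_k(x_z)\leftrightarrow\mathrm{B}_k(x_{z'})$ occur simultaneously with probability $1-O(\eta)$. They say nothing about whether these crossings are realised by the same cluster. Since $\mathrm{B}_k(x_z)$ is not required to be open, different crossings may hit it at different vertices lying in distinct clusters, and the ``triangle-type chain'' does not change this.

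Your proposed fix fails for the same reason. Two clusters that both meet $\mathrm{B}_k(x_{z'})$ and a common far ball $\mathrm{B}_k(x_w)$ can stay disjoint, and applying the hypothesis ``once more'' (at $k$ or at some $k'>k$) only produces another crossing, not a merger. What you want to ``deduce from \eqref{eq:uniqimpliesperco}'' is that two distinct clusters crossing from $\mathrm{B}_k(x)$ to distance $n$ inside $\mathrm{B}_{Cn}$ are unlikely. That is a local uniqueness statement, which is neither increasing nor decreasing, and it cannot be extracted from lower bounds on connection probabilities.

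The missing ingredient is the almost sure uniqueness of the infinite cluster, which the paper uses. Fix $k$. Almost surely, for all large $n_0$, at most one cluster of $\omega\cap\mathrm{B}_{n_0}^\circ$ meets both $\mathrm{B}_k$ and $\partialin\mathrm{B}_{n_0}$. This holds pointwise because the finite clusters touching $\mathrm{B}_k$ are eventually contained in $\mathrm{B}_{n_0}$, and the finitely many vertices of $\mathrm{B}_k$ in the unique infinite cluster are eventually connected inside $\mathrm{B}_{n_0}$. Hence this event $U$ has probability $>1-\varepsilon$ for some $n_0$.

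One then takes $n\gg n_0$ with crossing probability $>1-\varepsilon$. Declare a coarse edge $uv$ open if $\mathrm{B}_k(nu)\xleftrightarrow{\mathrm{B}_{Cn}(nu)}\mathrm{B}_k(nv)$ and both translates $U(nu)$ and $U(nv)$ occur. This gives a finite-range dependent field with marginals at least $1-3\varepsilon$. Two open coarse edges sharing a site $v$ give crossings that both meet $\mathrm{B}_k(nv)$ and exit $\mathrm{B}_{n_0}(nv)$, so $U(nv)$ glues them. With this event added to each coarse site, your Liggett--Schonmann--Stacey and finite-energy conclusion goes through, and you need neither fully open balls nor a second radius $k'$.
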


\begin{proof}
Suppose that we have a constant $C\ge 2$ satisfying \eqref{eq:uniqimpliesperco}.
By \cite{LSS97} (or a direct Peierls argument), there exists $\varepsilon > 0$ such that any $3C$-dependent percolation on the usual square lattice $\mathbb{Z}^2$ with marginals of at least $1-3\varepsilon$ has an infinite cluster almost surely. Then there exists $k < \infty$ such that
  \begin{equation}
    \label{eq:15}
    \limsup_{n \to \infty} \mathrm{P}_p(\mathrm{B}_k \xleftrightarrow{\mathrm{B}_{Cn}} \mathrm{B}_k(n)) > 1 - \varepsilon.
  \end{equation}
    By the uniqueness of the infinite cluster, there exists $n_0<\infty$ such that
  \begin{equation}
    \label{eq:9}
    \mathrm{P}_p(U) > 1 - \varepsilon,
  \end{equation}
  where $U$ is the event that there exists at most one cluster in the restriction of our percolation configuration to $\mathrm{B}_{n_0}^\circ$ that intersects both $\mathrm{B}_k$ and $\partialin \mathrm{B}_{n_0}$. Hence by equation~\eqref{eq:15}, there exists $n \ge n_0$ such that
  \begin{equation}
    \label{eq:17}
    \mathrm{P}_p(\mathrm{B}_k \xleftrightarrow{\mathrm{B}_{Cn}} \mathrm{B}_k(n)) > 1 - \varepsilon.
  \end{equation}

    For each edge $uv$ of $\mathbb Z^2 \subset \mathbb Z^d$ (using the obvious inclusion), let $\zeta(uv)$ denote the indicator of the event
    \[
        \{(nu + \mathrm{B}_k) \xleftrightarrow{nu+\mathrm{B}_{Cn}} (nv + \mathrm{B}_k)\} \cap U(nu) \cap U(nv),
    \]
    where $U(x)$ denotes the event $U$ translated to be centred around the vector $(x,0,\dots,0)$. Note that $\zeta$ is a $3C$-dependent percolation with marginals at least $1-3\eps$. Moreover, a.s., if $\zeta$ contains an infinite cluster then the restriction of our original percolation configuration to $\mathbb S_{Cn}$ does too. So by our choice of $C$, we have $\mathrm{P}_p(0 \xleftrightarrow{\mathbb{S}_{Cn}} \infty) > 0$.
\end{proof}

\begin{proof}[Proof of Theorem~\ref{thm:1}] 
Let $p > p_c(\mathbb{Z}^d)$.
Since we are working on $\mathbb Z^d$, we know that $\Phi(n) \gtrsim n^{\frac{d-1}{d}}$. 
So by Theorem~\ref{prop:collectingmass} and some calculus, there exists $\delta > 0$ such that 
\begin{equation}
    \label{eq:7}
    \lim_{k \to \infty} \inf_{n \ge 3k} \mathrm{P}_p(|\mathcal{C}_{\mathrm{B}_k}^n| >\delta |\mathrm{B}_{2n}|) = 1.
\end{equation}
Let $C\coloneqq 2\lceil 1/\delta\rceil$. We will show that~\eqref{eq:uniqimpliesperco} holds with $2C$, which is sufficient to conclude our proof thanks to Lemma~\ref{lem:uniqimpliesperco}. To this end, let $\varepsilon\in (0,1)$. Pick $k \ge 1$ such that for all $n \geq 3k$,
\begin{equation}
    \label{eq:8}
    \mathrm{P}_p(|\mathcal{C}_{\mathrm{B}_k}^n| > \delta |\mathrm{B}_{2n}|) > 1 - \eta,
\end{equation}
where $\eta := \varepsilon^{C^2}/C$. Now let $n \geq 3k$. For each $i \in \{0, \ldots, C-1\}$, define
\[
    \mathcal{C}_i := \{x \in \mathrm{B}_{2Cn} : x \xleftrightarrow{\mathrm{B}_{Cn}(ni)} \mathrm{B}_k(ni)\}.
\]
Since $\mathrm{B}_{Cn}(ni) \subset \mathrm{B}_{2Cn}$, by~\eqref{eq:8} applied with $Cn$ in place of $n$ and a union bound,
\[
    \mathrm P_p\big( |\cC_i| > \delta |\mathrm{B}_{2Cn}|\text{ for all $i$}\big) \geq 1-C\eta.
\]
If $|\cC_i|\ge \delta |\mathrm{B}_{2Cn}|$ for all $i$, by a density argument, there exist indices $i \neq j$ such that $\mathcal{C}_i$ intersects $\mathcal{C}_j$. Since $|ni-nj|\leq Cn$, we have $\mathrm B_{Cn}(nj)\subset \mathrm B_{2Cn}(nj)$ and therefore $\mathrm{B}_k(ni) \xleftrightarrow{\mathrm{B}_{2Cn}(ni)} \mathrm{B}_k(nj)$. Since there are only at most $C^2$ pairs of indices, by the $\sqrt{\mathsf{trick}}$, we deduce that there exists fixed indices $i\not= j$ such that
\begin{equation}
  \label{eq:13}
  \mathrm{P}_p(\mathrm{B}_k(ni) \xleftrightarrow{\mathrm{B}_{2Cn}(ni)} \mathrm{B}_k(nj)) > 1 - (C\eta)^{1/C^2}\geq 1-\varepsilon.
\end{equation}
So by translation invariance, $m\coloneqq|i-j|n \geq n$ satisfies
\[
    \mathrm P_p(\mathrm{B}_k \xleftrightarrow{\mathrm{B}_{2Cm}} \mathrm{B}_k(m)) \geq 1- \eps.
\]
Since $n$ was arbitrary, this establishes that~\eqref{eq:uniqimpliesperco} holds.
\qedhere
\end{proof}

\begin{remark}
Theorem \ref{thm:1} says that for all $p > p_c(\mathbb Z^d)$, we have percolation in a sufficiently thick slab. In particular, we certainly have percolation in the half-space $\mathbb H:=\mathbb N\times\mathbb {Z}^{d-1}$. Let us sketch how to deduce that we have percolation in the half-space directly from Theorem \ref{thm: main}, without going via slabs: Consider the region between $\mathrm B_n$ and $S := \partialin \mathrm B_{2n}$ for some large $n$. By Theorem \ref{thm: main}, with probability at least $1/2$, some constant proportion $c >0 $ of the vertices in $S$ are connected to $\mathrm{B}_{n}$. (Otherwise, by closing $\partial B_n$, we can force $\mathrm{B}_n \centernot\leftrightarrow \infty$ with probability at least $\frac{1}{2} \cdot(1-p)^{d c |S|}$, contradicting Theorem \ref{thm: main} if $c$ is small.) By considering the point of view of a uniformly random vertex in $S$ on this event, we see that $\mathrm P_p(0 \xleftrightarrow{\mathbb H} \infty) \geq c/2> 0$, as required. 
    
    
\end{remark}

\paragraph{Acknowledgements} We thank S\'{e}bastien Martineau for helpful comments on an earlier draft. BS was supported by SNSF grant 200021--228014. This project has received funding from the European Research Council (ERC) under the European Union’s Horizon 2020 research and innovation programme (grant agreement No 851565).

\bibliographystyle{alphaabbr}
\bibliography{perco}

@article{AB87,
 author = {Aizenman, Michael and Barsky, David J.},
 title = {Sharpness of the phase transition in percolation models},
 fjournal = {Communications in Mathematical Physics},
 journal = {Commun. Math. Phys.},
 issn = {0010-3616},
 volume = {108},
 pages = {489--526},
 year = {1987},
 language = {English},
 doi = {10.1007/BF01212322},
 zbMATH = {4003223},
 Zbl = {0618.60098}
}

@article{M86,
 author = {Men'shikov, M. V.},
 title = {Coincidence of critical points in percolation problems},
 fjournal = {Soviet Mathematics. Doklady},
 journal = {Sov. Math., Dokl.},
 issn = {0197-6788},
 volume = {33},
 pages = {856--859},
 year = {1986},
 language = {English},
 zbMATH = {3996823},
 Zbl = {0615.60096}
}

@article{AV08,
 author = {Antunovi{\'c}, Ton{\'c}i and Veseli{\'c}, Ivan},
 title = {Sharpness of the phase transition and exponential decay of the subcritical cluster size for percolation on quasi-transitive graphs},
 fjournal = {Journal of Statistical Physics},
 journal = {J. Stat. Phys.},
 issn = {0022-4715},
 volume = {130},
 number = {5},
 pages = {983--1009},
 year = {2008},
 language = {English},
 doi = {10.1007/s10955-007-9459-x},
 zbMATH = {5264682},
 Zbl = {1214.82028}
}

@article{DT16,
 author = {Duminil-Copin, Hugo and Tassion, Vincent},
 title = {A new proof of the sharpness of the phase transition for {Bernoulli} percolation and the {Ising} model},
 fjournal = {Communications in Mathematical Physics},
 journal = {Commun. Math. Phys.},
 issn = {0010-3616},
 volume = {343},
 number = {2},
 pages = {725--745},
 year = {2016},
 language = {English},
 doi = {10.1007/s00220-015-2480-z},
 zbMATH = {6579800},
 Zbl = {1342.82026}
}

@article {deducing_exp_from_GM,
    AUTHOR = {Chayes, J. T. and Chayes, L. and Newman, C. M.},
     TITLE = {Bernoulli percolation above threshold: an invasion percolation
              analysis},
   JOURNAL = {Ann. Probab.},
  FJOURNAL = {The Annals of Probability},
    VOLUME = {15},
      YEAR = {1987},
    NUMBER = {4},
     PAGES = {1272--1287},
      ISSN = {0091-1798,2168-894X},
   MRCLASS = {60K35 (82A43)},
  MRNUMBER = {905331},
MRREVIEWER = {H.\ Kesten},
       URL =
              {http://links.jstor.org/sici?sici=0091-1798(198710)15:4<1272:BPATAI>2.0.CO;2-2&origin=MSN},
}

@article{perco_beyond_zd,
 author = {Benjamini, Itai and Schramm, Oded},
 title = {Percolation beyond {{\(\mathbb{Z}^ d\)}}, many questions and a few answers},
 fjournal = {Electronic Communications in Probability},
 journal = {Electron. Commun. Probab.},
 issn = {1083-589X},
 volume = {1},
 pages = {71--82},
 year = {1996},
 language = {English},
 doi = {10.1214/ECP.v1-978},
 url = {https://eudml.org/doc/119473},
 zbMATH = {1122743},
 Zbl = {0890.60091}
}

@book {Prob_on_trees_and_networks,
    AUTHOR = {Lyons, Russell and Peres, Yuval},
     TITLE = {Probability on trees and networks},
    SERIES = {Cambridge Series in Statistical and Probabilistic Mathematics},
    VOLUME = {42},
 PUBLISHER = {Cambridge University Press, New York},
      YEAR = {2016},
     PAGES = {xv+699},
      ISBN = {978-1-107-16015-6},
   MRCLASS = {60C05 (05C05 05C81 28A80 60J10 60J80 60K35 82B41)},
  MRNUMBER = {3616205},
MRREVIEWER = {Laurent\ Miclo},
       DOI = {10.1017/9781316672815},
       URL = {https://doi.org/10.1017/9781316672815},
}

@article{FR08,
 author = {Fountoulakis, N. and Reed, B. A.},
 title = {The evolution of the mixing rate of a simple random walk on the giant component of a random graph},
 fjournal = {Random Structures \& Algorithms},
 journal = {Random Struct. Algorithms},
 issn = {1042-9832},
 volume = {33},
 number = {1},
 pages = {68--86},
 year = {2008},
 language = {English},
 doi = {10.1002/rsa.20210},
 zbMATH = {5320778},
 Zbl = {1147.60316}
}

@article{BKW14,
 author = {Benjamini, Itai and Kozma, Gady and Wormald, Nicholas},
 title = {The mixing time of the giant component of a random graph},
 fjournal = {Random Structures \& Algorithms},
 journal = {Random Struct. Algorithms},
 issn = {1042-9832},
 volume = {45},
 number = {3},
 pages = {383--407},
 year = {2014},
 language = {English},
 doi = {10.1002/rsa.20539},
 zbMATH = {6370216},
 Zbl = {1373.05172}
}

@misc{ADLZ25,
 author = {Anastos, Michael and Diskin, Sahar and Lichev, Lyuben and Zhukovskii, Maksim},
 title = {Diameter and mixing time of the giant component in the percolated hypercube},
 year = {2025},
 howpublished = {Preprint, {arXiv}:2510.13348 [math.{PR}] (2025)},
 url = {https://arxiv.org/abs/2510.13348},
 arXiv = {arXiv:2510.13348}
}

@article{EKK23,
 author = {Erde, Joshua and Kang, Mihyun and Krivelevich, Michael},
 title = {Expansion in supercritical random subgraphs of the hypercube and its consequences},
 fjournal = {The Annals of Probability},
 journal = {Ann. Probab.},
 issn = {0091-1798},
 volume = {51},
 number = {1},
 pages = {127--156},
 year = {2023},
 language = {English},
 doi = {10.1214/22-AOP1592},
 zbMATH = {7628800},
 Zbl = {1506.05115}
}

@article{BM03,
 author = {Benjamini, Itai and Mossel, Elchanan},
 title = {On the mixing time of a simple random walk on the super critical percolation cluster},
 fjournal = {Probability Theory and Related Fields},
 journal = {Probab. Theory Relat. Fields},
 issn = {0178-8051},
 volume = {125},
 number = {3},
 pages = {408--420},
 year = {2003},
 language = {English},
 doi = {10.1007/s00440-002-0246-y},
 zbMATH = {1964647},
 Zbl = {1020.60037}
}

@book {Grimmetts_perco_book,
    AUTHOR = {Grimmett, Geoffrey},
     TITLE = {Percolation},
    SERIES = {Grundlehren der mathematischen Wissenschaften [Fundamental
              Principles of Mathematical Sciences]},
    VOLUME = {321},
   EDITION = {Second},
 PUBLISHER = {Springer-Verlag, Berlin},
      YEAR = {1999},
     PAGES = {xiv+444},
      ISBN = {3-540-64902-6},
   MRCLASS = {60K35 (60-02 82B43)},
  MRNUMBER = {1707339},
MRREVIEWER = {Neal\ Madras},
       DOI = {10.1007/978-3-662-03981-6},
       URL = {https://doi.org/10.1007/978-3-662-03981-6},
}

@article{GM90,
 author = {Grimmett, G. R. and Marstrand, J. M.},
 title = {The supercritical phase of percolation is well behaved},
 fjournal = {Proceedings of the Royal Society of London. Series A. Mathematical and Physical Sciences},
 journal = {Proc. R. Soc. Lond., Ser. A},
 issn = {0080-4630},
 volume = {430},
 number = {1879},
 pages = {439--457},
 year = {1990},
 language = {English},
 doi = {10.1098/rspa.1990.0100},
 zbMATH = {4169828},
 Zbl = {0711.60100}
}

@article{HH21,
 author = {Hermon, Jonathan and Hutchcroft, Tom},
 title = {Supercritical percolation on nonamenable graphs: isoperimetry, analyticity, and exponential decay of the cluster size distribution},
 fjournal = {Inventiones Mathematicae},
 journal = {Invent. Math.},
 issn = {0020-9910},
 volume = {224},
 number = {2},
 pages = {445--486},
 year = {2021},
 language = {English},
 doi = {10.1007/s00222-020-01011-3},
 zbMATH = {7355937},
 Zbl = {1516.60057}
}

@article{CMT24,
 author = {Contreras, Daniel and Martineau, S{\'e}bastien and Tassion, Vincent},
 title = {Supercritical percolation on graphs of polynomial growth},
 fjournal = {Duke Mathematical Journal},
 journal = {Duke Math. J.},
 issn = {0012-7094},
 volume = {173},
 number = {4},
 pages = {745--806},
 year = {2024},
 language = {English},
 doi = {10.1215/00127094-2023-0032},
 zbMATH = {7858201},
 Zbl = {1552.60262}
}

@article{EST25,
 author = {Easo, Philip and Severo, Franco and Tassion, Vincent},
 title = {Counting minimal cutsets and {{\(p_c<1\)}}},
 fjournal = {Forum of Mathematics, Pi},
 journal = {Forum Math. Pi},
 issn = {2050-5086},
 volume = {13},
 pages = {13},
 note = {Id/No e23},
 year = {2025},
 language = {English},
 doi = {10.1017/fmp.2025.10011},
 zbMATH = {8110757}
}

@article {V25,
    AUTHOR = {Vanneuville, Hugo},
     TITLE = {Exponential decay of the volume for {B}ernoulli percolation: a
              proof via stochastic comparison},
   JOURNAL = {Ann. H. Lebesgue},
  FJOURNAL = {Annales Henri Lebesgue},
    VOLUME = {8},
      YEAR = {2025},
     PAGES = {101--112},
      ISSN = {2644-9463},
   MRCLASS = {60K35},
  MRNUMBER = {4912669},
}

@incollection {BLS99,
    AUTHOR = {Benjamini, Itai and Lyons, Russell and Schramm, Oded},
     TITLE = {Percolation perturbations in potential theory and random
              walks},
 BOOKTITLE = {Random walks and discrete potential theory ({C}ortona, 1997)},
    SERIES = {Sympos. Math.},
    VOLUME = {XXXIX},
     PAGES = {56--84},
 PUBLISHER = {Cambridge Univ. Press, Cambridge},
      YEAR = {1999},
      ISBN = {0-521-77312-1},
   MRCLASS = {60K35 (31C20 60G50)},
  MRNUMBER = {1802426},
MRREVIEWER = {Yu\ Zhang},
}

@article {H22,
    AUTHOR = {Hutchcroft, Tom},
     TITLE = {Transience and anchored isoperimetric dimension of
              supercritical percolation clusters},
   JOURNAL = {Electron. J. Probab.},
  FJOURNAL = {Electronic Journal of Probability},
    VOLUME = {28},
      YEAR = {2023},
     PAGES = {Paper No. 14, 15},
      ISSN = {1083-6489},
   MRCLASS = {60K35 (60J99)},
  MRNUMBER = {4536689},
MRREVIEWER = {Akira\ Sakai},
       DOI = {10.1214/23-ejp905},
       URL = {https://doi.org/10.1214/23-ejp905},
}

@article {G81,
    AUTHOR = {Gromov, Mikhael},
     TITLE = {Groups of polynomial growth and expanding maps},
   JOURNAL = {Inst. Hautes \'Etudes Sci. Publ. Math.},
  FJOURNAL = {Institut des Hautes \'Etudes Scientifiques. Publications
              Math\'ematiques},
    NUMBER = {53},
      YEAR = {1981},
     PAGES = {53--73},
      ISSN = {0073-8301,1618-1913},
   MRCLASS = {53C20 (22E40 58F15)},
  MRNUMBER = {623534},
MRREVIEWER = {J.\ A.\ Wolf},
       URL = {http://www.numdam.org/item?id=PMIHES_1981__53__53_0},
}

@article {T03,
    AUTHOR = {Trofimov, Vladimir I.},
     TITLE = {Undirected and directed graphs with near polynomial growth},
   JOURNAL = {Discuss. Math. Graph Theory},
  FJOURNAL = {Discussiones Mathematicae. Graph Theory},
    VOLUME = {23},
      YEAR = {2003},
    NUMBER = {2},
     PAGES = {383--391},
      ISSN = {1234-3099,2083-5892},
   MRCLASS = {05C25 (20F65)},
  MRNUMBER = {2070163},
MRREVIEWER = {Tatiana\ Smirnova-Nagnibeda},
       DOI = {10.7151/dmgt.1208},
       URL = {https://doi.org/10.7151/dmgt.1208},
}

@article{DCRT19,
 author = {Duminil-Copin, Hugo and Raoufi, Aran and Tassion, Vincent},
 title = {Sharp phase transition for the random-cluster and {Potts} models via decision trees},
 fjournal = {Annals of Mathematics. Second Series},
 journal = {Ann. Math. (2)},
 issn = {0003-486X},
 volume = {189},
 number = {1},
 pages = {75--99},
 year = {2019},
 language = {English},
 doi = {10.4007/annals.2019.189.1.2},
 zbMATH = {7003145},
 Zbl = {1482.82009}
}

@article{LMS08,
 author = {Lyons, Russell and Morris, Benjamin J. and Schramm, Oded},
 title = {Ends in uniform spanning forests},
 fjournal = {Electronic Journal of Probability},
 journal = {Electron. J. Probab.},
 issn = {1083-6489},
 volume = {13},
 pages = {1702--1725},
 year = {2008},
 language = {English},
 doi = {10.1214/EJP.v13-566},
 url = {https://eudml.org/doc/230776},
 zbMATH = {5636553},
 Zbl = {1191.60016}
}

@book {W00,
    AUTHOR = {Woess, Wolfgang},
     TITLE = {Random walks on infinite graphs and groups},
    SERIES = {Cambridge Tracts in Mathematics},
    VOLUME = {138},
 PUBLISHER = {Cambridge University Press, Cambridge},
      YEAR = {2000},
     PAGES = {xii+334},
      ISBN = {0-521-55292-3},
   MRCLASS = {60B15 (60G50 60J10)},
  MRNUMBER = {1743100},
MRREVIEWER = {Donald\ I.\ Cartwright},
       DOI = {10.1017/CBO9780511470967},
       URL = {https://doi.org/10.1017/CBO9780511470967},
}

@article {MR91567,
    AUTHOR = {Broadbent, S. R. and Hammersley, J. M.},
     TITLE = {Percolation processes. {I}. {C}rystals and mazes},
   JOURNAL = {Proc. Cambridge Philos. Soc.},
  FJOURNAL = {Proceedings of the Cambridge Philosophical Society},
    VOLUME = {53},
      YEAR = {1957},
     PAGES = {629--641},
      ISSN = {0008-1981},
   MRCLASS = {60.0X},
  MRNUMBER = {91567},
MRREVIEWER = {G.\ Newell},
       DOI = {10.1017/s0305004100032680},
       URL = {https://doi.org/10.1017/s0305004100032680},
}

@article {Peres,
    AUTHOR = {Chen, Dayue and Peres, Yuval},
     TITLE = {Anchored expansion, percolation and speed},
      NOTE = {With an appendix by G\'abor Pete},
   JOURNAL = {Ann. Probab.},
  FJOURNAL = {The Annals of Probability},
    VOLUME = {32},
      YEAR = {2004},
    NUMBER = {4},
     PAGES = {2978--2995},
      ISSN = {0091-1798,2168-894X},
   MRCLASS = {60C05 (60D05 60G50 60K35 60K37)},
  MRNUMBER = {2094436},
MRREVIEWER = {Timo\ Sepp\"al\"ainen},
       DOI = {10.1214/009117904000000586},
       URL = {https://doi.org/10.1214/009117904000000586},
}

@article {Gabor,
    AUTHOR = {Pete, G\'abor},
     TITLE = {A note on percolation on {$\Bbb Z^d$}: isoperimetric profile
              via exponential cluster repulsion},
   JOURNAL = {Electron. Commun. Probab.},
  FJOURNAL = {Electronic Communications in Probability},
    VOLUME = {13},
      YEAR = {2008},
     PAGES = {377--392},
      ISSN = {1083-589X},
   MRCLASS = {60K35 (82B43 82D30)},
  MRNUMBER = {2415145},
       DOI = {10.1214/ECP.v13-1390},
       URL = {https://doi.org/10.1214/ECP.v13-1390},
}

@article {DisorderEntropy,
    AUTHOR = {Benjamini, Itai and Duminil-Copin, Hugo and Kozma, Gady and
              Yadin, Ariel},
     TITLE = {Disorder, entropy and harmonic functions},
   JOURNAL = {Ann. Probab.},
  FJOURNAL = {The Annals of Probability},
    VOLUME = {43},
      YEAR = {2015},
    NUMBER = {5},
     PAGES = {2332--2373},
      ISSN = {0091-1798,2168-894X},
   MRCLASS = {60K37 (31A05 37A35 60B15 60J10 82B43)},
  MRNUMBER = {3395463},
MRREVIEWER = {Daniel\ Boivin},
       DOI = {10.1214/14-AOP934},
       URL = {https://doi.org/10.1214/14-AOP934},
}

@book{BO06,
 author = {Bollob{\'a}s, B{\'e}la and Riordan, Oliver},
 title = {Percolation.},
 isbn = {0-521-87232-4},
 year = {2006},
 publisher = {Cambridge: Cambridge University Press},
 language = {English},
 doi = {10.1017/CBO9781139167383},
 zbMATH = {5076923},
 Zbl = {1118.60001}
}

@article {Gaboriau,
    AUTHOR = {Gaboriau, D.},
     TITLE = {Invariant percolation and harmonic {D}irichlet functions},
   JOURNAL = {Geom. Funct. Anal.},
  FJOURNAL = {Geometric and Functional Analysis},
    VOLUME = {15},
      YEAR = {2005},
    NUMBER = {5},
     PAGES = {1004--1051},
      ISSN = {1016-443X,1420-8970},
   MRCLASS = {60K35 (37A20)},
  MRNUMBER = {2221157},
MRREVIEWER = {Filippo\ Cesi},
       DOI = {10.1007/s00039-005-0539-2},
       URL = {https://doi.org/10.1007/s00039-005-0539-2},
}

@article {LSS97,
    AUTHOR = {Liggett, T. M. and Schonmann, R. H. and Stacey, A. M.},
     TITLE = {Domination by product measures},
   JOURNAL = {Ann. Probab.},
  FJOURNAL = {The Annals of Probability},
    VOLUME = {25},
      YEAR = {1997},
    NUMBER = {1},
     PAGES = {71--95},
      ISSN = {0091-1798,2168-894X},
   MRCLASS = {60G60 (60G10 60K35)},
  MRNUMBER = {1428500},
MRREVIEWER = {Vincent\ De Valk},
       DOI = {10.1214/aop/1024404279},
       URL = {https://doi.org/10.1214/aop/1024404279},
}

\end{document}